\newtheorem{theorem}{Theorem}[subsection]
\newtheorem{corollary}[theorem]{Corollary}
\newtheorem{lemma}[theorem]{Lemma}
\theoremstyle{definition}
\numberwithin{equation}{subsection}
\numberwithin{table}{subsection}
\newcommand{\C}{{\mathbb C}}
\newcommand{\0}{\bar 0}
\newcommand{\1}{\bar 1}
\newcommand{\g}{\ensuremath{\mathfrak{g}}}
\newcommand{\e}{\ensuremath{\mathfrak{e}}}
\newcommand{\f}{\ensuremath{\mathfrak{f}}}
\begin{document}

\title[The Nilpotent Cone for Classical Lie Superalgebras]
{\bf The Nilpotent Cone for Classical Lie Superalgebras}

\author{\sc L. Andrew Jenkins}
\address
{Department of Mathematics\\ University of Georgia \\
Athens\\ GA~30602, USA}
\thanks{Research of the first author was supported in part by NSF (RTG) grant DMS-1344994}
\email{lee.jenkins25@uga.edu}

\author{\sc Daniel K. Nakano}
\address
{Department of Mathematics\\ University of Georgia \\
Athens\\ GA~30602, USA}
\thanks{Research of the second author was supported in part by NSF
grant  DMS-1701768.} 

\email{nakano@uga.edu}

\keywords{Lie superalgebras, representation theory}
\date\today

\begin{abstract} In this paper the authors introduce an analog of the nilpotent cone, ${\mathcal N}$, for a classical Lie superalgebra, ${\mathfrak g}$, that generalizes the 
definition for the nilpotent cone for semisimple Lie algebras. For a classical simple Lie superalgebra, ${\mathfrak g}={\mathfrak g}_{\0}\oplus 
{\mathfrak g}_{\1}$ with $\text{Lie }G_{\0}={\mathfrak g}_{\0}$, it is shown that there are finitely many $G_{\0}$-orbits on ${\mathcal N}$. 
Later the authors prove that the Duflo-Serganova commuting variety, ${\mathcal X}$, is contained in ${\mathcal N}$ for any classical simple 
Lie superalgebra. Consequently, our finiteness result generalizes and extends the work of Duflo-Serganova on the commuting variety. 
Further applications are given at the end of the paper.  
\end{abstract} 

\maketitle

\vskip 1cm

\section{Introduction}

\subsection{} Let ${\mathfrak g}$ be a finite-dimensional Lie algebra over the complex numbers and ${\mathcal N}$ be the set of nilpotent elements, often 
referred to as the {\em nilpotent cone}. In the case when ${\mathfrak g}$ is semisimple with ${\mathfrak g}=\text{Lie } G$, it is well-known that ${\mathcal N}$ has finitely many $G$-orbits. 
For the classical families of simple Lie algebras (root systems of types $A$-$D$) a parametrization of orbits is given by partitions under suitable conditions, and for the exceptional Lie algebras one can either 
use the Bala-Carter labelling or weighted Dynkin diagrams. For a semisimple Lie algebra, ${\mathfrak g}$, fundamental results in geometric representation theory have involved 
investigating the geometry of the nilpotent cone ${\mathcal N}$ (also its Springer resolution $\widetilde{\mathcal N}$) and its relationship to the representation theory for ${\mathfrak g}$ (cf. \cite{HTT}). 

The nilpotent cone ${\mathcal N}$ can be realized as the zero set of the constant term zero $G$-invariant polynomials on ${\mathfrak g}$. The $G$-invariant polynomials also have a direct 
connection with the semisimple elements. Under the Chevalley isomorphism theorem the restriction map induces an isomorphism 
$\text{res}:S({\mathfrak g}^{*})^{G}\rightarrow S({\mathfrak t}^{*})^W$ where ${\mathfrak t}$ is a maximal torus of ${\mathfrak g}$ and $W$ is the Weyl group. 
The semisimple elements are those elements in ${\mathfrak g}$ that are $G$-conjugate to an element in ${\mathfrak t}$ \cite[Section 0.1]{Hum}. 

\subsection{} A similar picture arises in the study of classical simple Lie superalgebras, ${\mathfrak g}={\mathfrak g}_{\0}\oplus {\mathfrak g}_{\1}$. Boe, Kujawa and Nakano \cite{BKN1} 
used invariant theory for reductive groups to show that there are natural classes of ``subalgebras'' that detect the cohomology. These subalgebras arise from 
considering ``semisimple" elements of the $G_{\0}$ action on ${\mathfrak g}_{\1}$, and fall into two families: ${\mathfrak f}$ (when 
$\g$ is stable) and ${\mathfrak e}$ (when $\g$ is polar). If ${\mathfrak g}$ is a classical simple Lie superalgebra, then $\g$ admits a stable action and in most cases $\g$ admits a polar action 
(cf \cite[Table 5]{BKN1}). 

When stable and polar actions exist, the restriction maps induce isomorphisms: 
$$
\CD
\text{H}^{\bullet}(\g,\g_{\0},{\mathbb C}) @>>> \text{H}^{\bullet}(\f,\f_{\0},{\mathbb C})^{N} @>>> \text{H}^{\bullet}(\e,\e_{\0},{\mathbb C})^{W_{\e}} \\
@VVV  @VVV  @VVV\\
S^{\bullet}({\mathfrak g}_{\1}^{*})^{G_{\0}} @>>> S^{\bullet}({\mathfrak f}_{\1}^{*})^{N} @>>> S^{\bullet}({\mathfrak e}_{\1}^{*})^{W_{\e}} \\
\endCD
$$
where $N$ is a reductive group and $W_{\e}$ is a finite pseudoreflection group. 
The finite generation of these cohomology rings was used in \cite{BKN1} to define support varieties for ${\mathfrak g}$-modules. In \cite{GGNW} 
it was shown that the support varieties (appropriately) defined over ${\mathfrak g}$, ${\mathfrak f}$ and ${\mathfrak e}$ are isomorphic. 

In order to have a complete picture, it is natural to ask whether there exists an algebraic variety consisting of ``nilpotent elements" for classical Lie superalgebras that fits into this framework. 
Since ${\mathfrak g}$ is classical, ${\mathfrak g}_{\0}=\text{Lie } G_{\0}$ where $G_{\0}$ is a reductive algebraic group. In this paper we study a generalization of the nilpotent cone 
$${\mathcal N}={\mathcal N}_{\mathfrak g}=Z(S^{\bullet}({\mathfrak g}_{\1}^{*})^{G_{\0}}_{+})$$ 
where $Z(S^{\bullet}({\mathfrak g}_{\1}^{*})^{G_{\0}}_{+})$ is the zero-set of $G_{\0}$-invariant polynomials with constant term zero on ${\mathfrak g}_{\1}$. When ${\mathfrak g}={\mathfrak q}(n)$, 
one obtains the nilpotent cone for the Lie algebra $\mathfrak{gl}_{n}({\mathbb C})$. 

The construction in our paper is inspired by work of Kac \cite{Kac} in 1980. He defined the nilvariety as the zero locus of the constant term zero $G_{\0}$-invariant polynomials on $V$ where $V$ is 
a rational $G_{\0}$-module. Kac's results are in a more general context than our paper and there is some overlap in our results. We anticipate that the varieties ${\mathcal N}$ will play an  
important role in the representation theory for Lie superalgebras. There is evident in Section~\ref{S:connectionswithX} with the strong connections with the Duflo-Serganova commuting varieties
that were introduced in the mid 2000's. In the late 2000's, Gruson and Leidwanger \cite{GL} investigated the case of the nilpotent cone in the orthosymplectic case and constructed a resolution of 
singularities. The aim of our paper is to present a unified self-contained treatment of ${\mathcal N}$ for classical simple Lie superalgebras that can be easily referenced by those working in super representation theory. 

\subsection{} The paper is organized as follows. In Section~\ref{S:prelims}, the nilpotent cone for classical Lie superalgebras is defined. We also indicate 
how this definition generalizes the definition of the nilpotent cone for complex semisimple Lie algebras. Our first main result (Theorem~\ref{t:glmn case}) in Section~\ref{S:orbitsgl} demonstrates that the nilpotent cone for $\mathfrak{gl}(m|n)$ has finitely many $G_{\0}$-orbits. Explicit orbit representatives are determined. 
In Section~\ref{S:orbitsgeneral}, we prove a theorem that allows us to extend the finiteness result to the nilpotent cone for other classical simple Lie superalgebras. 
The ideas of the theorem are originally due to Richardson and can be applied in cases when there is a suitable embedding of the classical simple 
Lie superalgebras into a general linear Lie superalgebra. With these tools, we show that for classical simple Lie superalgebras, 
${\mathcal N}$ has finitely many $G_{\0}$-orbits. 

Duflo and Serganova \cite{DS} introduced the commuting variety ${\mathcal X}$ for any finite-dimensional Lie superalgebra. They proved that 
for basic classical Lie superalgebras, ${\mathcal X}$ has finitely many $G_{\0}$-orbits. In Section~\ref{S:connectionswithX}, we prove that for all classical simple 
Lie superalgebras, one has an inclusion ${\mathcal X}\subseteq {\mathcal N}$. In this way, one should consider the nilpotent cone a larger algebraic 
variety whose geometric properties should encompass that of ${\mathcal X}$. We show that our results on the finiteness of orbits for ${\mathcal N}$ allows us to extend the 
finiteness results in \cite{DS} to a wider class of Lie superalgebras (cf. Corollary~\ref{c:finitenessX}). 

\subsection{Acknowledgements} We thank Vera Serganova for comments and suggestions on an earlier version of our manuscript.

\section{Preliminaries}\label{S:prelims}

\subsection{Notation: }  Throughout this paper we will use the conventions in \cite{BKN1, BKN2, BKN3, GGNW}. 
Let $\mathfrak{g} = \mathfrak{g}_{\0} \oplus \mathfrak{g}_{\1}$ be a {\em classical Lie superalgebra} 
over $k={\mathbb C}$. This means there exists $G_{\0}$, a corresponding connected reductive algebraic group, such
 that $\text{Lie }G_{\0} = \mathfrak{g}_{\0}$ where $\mathfrak{g}_{\1}$ is a $G_{\0}$-module via the adjoint action.
The Lie superalgebra ${\mathfrak g}$ is a \emph{basic classical} if it is a classical Lie superalgebra with a nondegenerate invariant supersymmetric even bilinear form. 

Let ${\mathfrak g}$ be a classical Lie superalgebra and $S^{\bullet}({\mathfrak g}_{\1}^{*})$ be the symmetric algebra on the dual of 
${\mathfrak g}_{\1}$. We will often regard $S^{\bullet}({\mathfrak g}_{\1}^{*})$ as the polynomial functions on ${\mathfrak g}_{\1}$. 
Let  $S^{\bullet}({\mathfrak g}_{\1}^{*})_{+}$ be the polynomials with constant term equal to zero. The algebraic group $G_{\0}$ acts on ${\mathfrak g}_{\1}$, 
so we can consider the $G_{\0}$-invariant polynomials with zero constant term on ${\mathfrak g}_{\1}$ denoted by $S^{\bullet}({\mathfrak g}_{\1}^{*})_{+}^{G_{\0}}$. 
The {\em nilpotent cone}, ${\mathcal N}$, for ${\mathfrak g}$ is the zero set of these polynomials: 
$${\mathcal N}=Z(S^{\bullet}({\mathfrak g}_{\1}^{*})_{+}^{G_{\0}}).$$ 
Observe that $\mathcal{N} \subseteq \mathfrak{g}_{\1}$. The algebraic variety $\mathcal{N}$ is a $G_{\0}$-invariant closed cone 
in ${\mathfrak g}_{\1}$. 

\subsection{Simple Classical Lie Superalgebra} The main results of the paper will be stated for classical ``simple'' Lie superalgebras. 
We will use the term simple Lie superalgebra to refer to the Lie superalgebra of general interest that are not simple in the true sense, 
but close enough to being simple (cf. \cite{GGNW}). The Lie superalgebras that will be considered ``simple" include: 
\begin{itemize} 
\item $\mathfrak{gl}(m|n)$, $\mathfrak{sl}(m|n)$, $\mathfrak{psl}(n|n)$
\item $\mathfrak{osp}(m,n)$
\item $D(2,1,\alpha)$ 
\item $G(3)$
\item $F(4)$ 
\item $\mathfrak{q}(n)$, $\mathfrak{psq}(n)$ 
\item ${\mathfrak p}(n)$, $\widetilde{{\mathfrak p}}(n)$.
\end{itemize} 
For the Lie superalgebras of Type Q, $\mathfrak{q}(n)$ is the Lie superalgebra with even and odd parts $\mathfrak{gl}(n)$, while 
$\mathfrak{psq}(n)$ is the corresponding simple subquotient of $\mathfrak{q}(n)$ (cf. \cite{PS}). The Lie superalgebras 
that fall into the family of Type P include ${\mathfrak p}(n)$ and its enlargement $\widetilde{{\mathfrak p}}(n)$.

\subsection{Generalization of the ordinary nilpotent cone} 

We now indicate how our results generalize known results for the nilpotent cone for complex semisimple Lie algebras. 
Let ${\mathfrak a}$ be a complex semisimple Lie algebras, and set ${\mathfrak g}={\mathfrak g}_{\0}\oplus {\mathfrak g}_{\1}$ with 
${\mathfrak g}_{\0}={\mathfrak a}={\mathfrak g}_{\1}$ as vector spaces. We can make ${\mathfrak g}$ into a Lie superalgebra 
by defining the bracket on ${\mathfrak g}_{\0}$ to be the ordinary Lie bracket on ${\mathfrak a}$. The bracket of an element in 
${\mathfrak g}_{\0}$ on ${\mathfrak g}_{\1}$ is given by the adjoint action, and the bracket of any two elements in ${\mathfrak g}_{\1}$ is 
zero. 

Let $G_{\0}$ be the semisimple simply connected group such that $\text{Lie }G_{\0}={\mathfrak g}_{\0}$. Then 
${\mathcal N}$ is the ordinary nilpotent cone for ${\mathfrak g}_{\0}={\mathfrak a}$. One can also set this up for fields of 
prime characteristic, if one considers Lie algebras that arise as the Lie algebra of a semisimple algebraic group.


\section{$G_{\0}$-orbits on $\mathcal{N}$: $\mathfrak{gl}(m|n)$ case}\label{S:orbitsgl}

\subsection{}\label{SS:actions}  The adjoint action of $G_{\0} = GL_m(\C) \times GL_n(\C) $ on $\mathfrak{g}_{\1}$ is given by conjugation.  Explicitly, 
 \[ 
 \left[
\begin{array}{c|c}
A  & 0 \\ \hline
 0 & B
\end{array}\right] \cdot \left[
\begin{array}{c|c}
0  & X^+ \\ \hline
 X^- & 0
\end{array}\right] = \left[
\begin{array}{c|c}
A  & 0 \\ \hline
 0 & B
\end{array}\right]^{-1}  \left[
\begin{array}{c|c}
0  & X^+ \\ \hline
 X^- & 0
\end{array}\right] \left[
\begin{array}{c|c}
A  & 0 \\ \hline
 0 & B
\end{array}\right] =\left[
\begin{array}{c|c}
0 & A^{-1}X^+B \\ \hline
 B^{-1}X^-A & 0
\end{array}\right].
 \]
 
In this case, \cite[Section 2.6]{Fuks} has determined the generators of the invariants, $S({\mathfrak g}_{\1}^{*})^{G_{\0}}_{+}$, to be 
\[Tr((X^+X^-)^k), \hspace{0.5cm} k=1,\ldots, l\]
where $l=\text{min}\{m,n\}$.

For the following theorem we recall the notion of a matrix in column echelon form.  A matrix is in (reduced) column echelon form if it satisfies the following conditions:

\begin{itemize}
    \item all columns that consist entirely of zero entries appear as the right most columns of the matrix
    \item the first nonzero entry of each column is called the pivot, and the pivot is the only nonzero entry in its row
    \item if $j>i$, then the pivot of the nonzero column $c_j$ lies in a row strictly below the row of the pivot of column $c_i$.
\end{itemize}
We also use the convention that each pivot element is 1. By transposing the matrix, the notion of row echelon form can defined in a similar way 

In this section we will show that $\mathfrak{gl}(m\lvert n)$ has finitely many $G_{\0}$-orbits. Furthermore, explicit orbit representatives 
for this action will be exhibited. The results are summarized in the following theorem and the proof will be given in the next section. 

\begin{theorem} \label{t:glmn case}
Let $\mathfrak{g} = \mathfrak{gl}(m \lvert n).$  
\begin{itemize} 
\item[(a)] The number of $G_{\0}$-orbits of the adjoint action on $\mathcal{N}$ is finite.
\item[(b)] The complete set of orbit representatives is given by matrices 
$$
Y=\left[
\begin{array}{c|c}
0  & Y^+ \\ \hline
 Y^- & 0
\end{array}
\right] 
$$
where  
\[Y^+ = \left[
\begin{array}{c|c}
I_r  & 0 \\ \hline
 0 & 0
\end{array}\right] 
\qquad \text{and}  \qquad 
Y^{-}=
\left[
\begin{array}{c c c|c c c}
  J_1 & & & | & 0 & 0 \\ 
   & \ddots &  & C_{r_1} & 0 & 0\\ 
  & & J_t & | & 0 & 0 \\ \hline
  - & R_{r_2} & - & 0 & 0 & 0 \\
  0 & 0 & 0 & 0 & I_s & 0 \\
  0 & 0 & 0 & 0 & 0 & 0  \\
\end{array}\right].
\]
Here $I_{r}$ (resp. $I_{s}$) is a $r\times r$ (resp. $s\times s$) identity matrix, $J_{1},\dots J_{t}$ are Jordan blocks with zero eigenvalues, where the Jordan block $J_{i}$ is of size $k_{i}\times k_{i}$, $i=1,2,\dots,t$ with $k_{1}\geq k_{2}\dots \geq k_{t}$ with $r=\sum_{i=1}^{t}k_{i}$. Furthermore, the matrix $C_{r_1}$ (resp. $R_{r_2}$) are of size $r \times r_1$ (resp. $r_2\times r$) and of the form 
\[ C_{r_1} = \begin{pmatrix} 
e_{i_1} & e_{i_2} & \cdots & e_{i_{r_1}}
\end{pmatrix},\ \ \ \  R_{r_2} = \begin{pmatrix}
e_{j_1} & e_{j_2} & \cdots & e_{j_{r_2}}
\end{pmatrix}^T\]
where $e_{k}$ is the column vector with a single $1$ in the $k$-th row and zeroes elsewhere. Each index $i_p$ (resp. $j_{q}$) belongs to the set 
$\{k_1, k_1+k_2, \ldots , k_1+k_2+\ldots + k_t \}$ (resp. $\{ 1, 1+k_1, \ldots, 1+k_1+\ldots +k_{t-1} \} $). 
\end{itemize} 
\end{theorem}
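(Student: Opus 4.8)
The plan is to make ${\mathcal N}$ explicit, recognize its points as the odd nilpotent elements of $\gl$, classify those up to $G_{\0}$-conjugacy by a super-analogue of Jordan form (which gives (a)), and then read off the matrices $Y$ by normalizing first $X^{+}$ and then $X^{-}$. First I would pin down ${\mathcal N}$: by the list of invariants recalled in \S\ref{SS:actions} it is cut out by $\operatorname{Tr}((X^{+}X^{-})^{k})=0$, $1\le k\le\min\{m,n\}$, and since $X^{+}X^{-}$ and $X^{-}X^{+}$ share their nonzero eigenvalues with multiplicities --- so there are at most $\min\{m,n\}$ of them --- Newton's identities make these equations equivalent to $X^{+}X^{-}$ (equivalently $X^{-}X^{+}$) being nilpotent. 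I would then attach to $(X^{+},X^{-})$ the matrix $\Phi=\left[\begin{smallmatrix}0&X^{+}\\ X^{-}&0\end{smallmatrix}\right]$ in the odd part $\gl_{\1}$: because $\Phi^{2}=\operatorname{diag}(X^{+}X^{-},X^{-}X^{+})$, membership in ${\mathcal N}$ is precisely nilpotence of $\Phi$, and conjugating $\Phi$ by $\operatorname{diag}(A,B)$, $(A,B)\in G_{\0}$, reproduces the adjoint action of \S\ref{SS:actions}. Thus, as a $G_{\0}$-set, ${\mathcal N}$ is the set of odd nilpotent elements of $\gl$ acted on by conjugation by $GL({\mathbb C}^{m|n})_{\0}=G_{\0}$.

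The classification of these odd nilpotents I would do by graded linear algebra. Letting $\Phi$ act as an odd operator $x$ turns ${\mathbb C}^{m}_{\0}\oplus{\mathbb C}^{n}_{\1}$ into a finite-dimensional ${\mathbb Z}/2$-graded torsion $k[x]$-module; by the graded structure theorem over the PID $k[x]$ together with graded Krull--Schmidt it decomposes uniquely, up to reordering, into cyclic ``super Jordan blocks'' $M_{d}^{\epsilon}=k[x]/(x^{d})$ with cyclic generator in parity $\epsilon\in\{\0,\1\}$ --- the indecomposables, each with local parity-preserving endomorphism ring. Hence $G_{\0}$-orbits on ${\mathcal N}$ biject with multisets of pairs $(d,\epsilon)$ whose even and odd dimensions total $m$ and $n$ (with $M_{d}^{\0}$ of even/odd dimension $\lceil d/2\rceil,\lfloor d/2\rfloor$ and $M_{d}^{\1}$ the reverse); there are finitely many, which is (a), and this multiset is the complete invariant I will use for irredundancy in (b).

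For the explicit representatives I would normalize in two stages, mirroring the shape of $Y$. By equivalence of matrices, act by $G_{\0}$ to get $X^{+}=Y^{+}=\left[\begin{smallmatrix}I_{r}&0\\0&0\end{smallmatrix}\right]$, $r=\operatorname{rank}X^{+}$; its stabilizer in $G_{\0}$ is a block-triangular subgroup in which the top-left $r\times r$ blocks of the two factors agree. Writing $X^{-}=\left[\begin{smallmatrix}P&Q\\ S&T\end{smallmatrix}\right]$ in matching blocks, $Y^{+}X^{-}=\left[\begin{smallmatrix}P&Q\\0&0\end{smallmatrix}\right]$, so the condition is just that $P$ be nilpotent; conjugating by the shared $GL_{r}$-factor puts $P$ in Jordan form $J_{1}\oplus\cdots\oplus J_{t}$ with $k_{1}\ge\cdots\ge k_{t}$, $\sum k_{i}=r$. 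The residual group is the centralizer $Z_{GL_{r}}(P)$ plus the free and unipotent blocks of the stabilizer; with it, the unipotent ``affine'' block subtracts from $Q$ arbitrary columns in $\operatorname{im}P$, while $Z_{GL_{r}}(P)$ and the remaining $GL$-factor perform row and column operations, and --- using that ${\mathbb C}^{r}/\operatorname{im}P$ has basis the cyclic generators $e_{K_{l}}$, $K_{l}=k_{1}+\cdots+k_{l}$ --- $Q$ is brought to the form $[\,C_{r_{1}}\mid 0\,]$ with columns among the $e_{K_{l}}$; dually $S$ is brought to $\left[\begin{smallmatrix}R_{r_{2}}\\ 0\end{smallmatrix}\right]$ with rows among the transposed kernel generators $e_{j}^{T}$, $j\in\{1,1+k_{1},\dots\}$, and $T$ is cleared down to a single $s\times s$ identity block. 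The outcome is precisely $Y$. Conversely, from $Y$ one recovers $\Phi$ and hence its super-Jordan multiset --- group each $J_{l}$ with any attached $C_{r_{1}}$-column or $R_{r_{2}}$-row, read each $I_{s}$-unit as $M_{2}^{\0}$, and the trailing zero columns and rows as copies of $M_{1}^{\0}$ and $M_{1}^{\1}$ --- so distinct such $Y$ (with the index data taken canonically) lie in distinct orbits, giving (b).

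The hard part is the reduction of $Q$, $S$, $T$ in the previous paragraph: after $X^{+}$ and $P$ are fixed one is left only with the centralizer of $P$ and the unipotent ``affine'' blocks, and one must check that these do suffice to reach the stated echelon shapes --- that the columns of $C_{r_{1}}$ can be taken to be cyclic generators and the rows of $R_{r_{2}}$ transposed kernel generators, with exactly the index sets $\{k_{1},k_{1}+k_{2},\dots,k_{1}+\cdots+k_{t}\}$ and $\{1,1+k_{1},\dots,1+k_{1}+\cdots+k_{t-1}\}$ --- and that the three reductions can be carried out compatibly, since reducing $Q$ and $S$ via the ``affine'' blocks perturbs $T$ and those perturbations must be absorbed. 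A secondary subtlety, which the super-Jordan-block invariant of the second paragraph handles cleanly, is irredundancy: among Jordan blocks of equal size one must distribute the extra columns and rows by a fixed rule, since the naive list of all choices of the $i_{p}$ and $j_{q}$ is mildly over-complete.
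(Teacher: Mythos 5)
Your treatment of part (a) is correct and takes a genuinely different route from the paper. The paper proves (a) and (b) together by brute-force reduction: normalize $X^{+}$ to $\operatorname{diag}(I_r,0)$, compute the stabilizer of $Y^{+}$ in $G_{\0}$ in block form, put $Y^{-}_{11}$ into Jordan form $J$ (using the same rank bound and power-sum argument you invoke), and then run a carefully ordered sequence of choices of $A_{12},B_{21},A_{22},B_{22}$ and of centralizer elements of $J$ to force $Y^{-}_{12},Y^{-}_{21},Y^{-}_{22}$ into the stated shapes. Your identification of $\mathcal{N}$ with the odd nilpotent elements of $\mathfrak{gl}(m|n)$ and the classification of these by multisets of graded cyclic modules $M_d^{\epsilon}$ (nilpotent representations of the two-vertex cyclic quiver) is a cleaner, more structural proof of finiteness, and it buys something the paper does not supply: a complete conjugacy invariant, hence a genuine irredundancy statement. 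The paper's argument only shows that every orbit meets the stated set of matrices, and your observation that this list is mildly over-complete when equal Jordan block sizes occur is a fair criticism of the statement as written.

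For part (b), however, there is a genuine gap, and you flag it yourself. After normalizing $X^{+}$ and putting $P=Y^{-}_{11}$ into Jordan form, the claim that the residual group --- $(A,B)$ centralizing $Y^{+}$ with $A_{11}$ centralizing $J$, together with the ``affine'' blocks $A_{12},B_{21}$ --- suffices to bring $Q$, $S$, $T$ to the stated forms, with pivot indices exactly in $\{k_1,k_1+k_2,\ldots,k_1+\cdots+k_t\}$ and $\{1,1+k_1,\ldots,1+k_1+\cdots+k_{t-1}\}$, and that the three reductions can be made compatible, is precisely the bulk of the paper's proof and is not established by your sketch. The delicate points are: the row/column operations on $Y^{-}_{12}$ and $Y^{-}_{21}$ must be realized inside the centralizer of $J$ (the paper constructs explicit matrices $L_{f(j),f(i)}(-\alpha)$ and $M_{k_j}(\alpha)$ for this, since the usual elementary matrices do not centralize $J$); cleaning $Y^{-}_{21}$ disturbs $Y^{-}_{12}$ and must be repaired afterwards using $A_{12}$ and then $A_{22}$; and the final clearing of the blocks of $Y^{-}_{22}$ uses $A_{12}$ and $B_{21}$ in a specific order so as not to undo earlier steps. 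Your module-theoretic classification actually offers a way to close this gap by a different route than the paper's: write the matrix of each indecomposable $M_d^{\epsilon}$ in a parity-ordered basis and check that a direct sum of indecomposables, suitably ordered, is $G_{\0}$-conjugate to a matrix of the stated form $Y$ (bare Jordan block of size $k$ corresponding to $M_{2k}^{\1}$, an attached $C$-column or $R$-row lengthening the chain at the top or bottom, the $I_s$ block giving copies of $M_2^{\0}$, and trailing zero columns and rows giving $M_1^{\0}$ and $M_1^{\1}$). You state this dictionary only in the converse direction (reading the invariant off from $Y$), so as written the existence half of (b) --- that every orbit contains a matrix of the stated form --- is not proved.
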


\subsection{Proof of Theorem~\ref{t:glmn case}} The proof of theorem in the prior section will entail several steps. We start will a general 
element $X\in {\mathcal N}$ where ${\mathfrak g}={\mathfrak g}{\mathfrak l}(m|n)$. Through a series of conjugations (i.e., applications of elements in 
$G_{\0}$), the element $X$ will be transformed into a matrix $Y$ of the form in Theorem~\ref{t:glmn case}(b). In the process of this 
transformation, we will use $Y$ (including $Y^{+}$ and $Y^{-}$) to denote the current matrix under the series of transformations. 

\subsubsection{} Let $X\in {\mathcal N}$. Using standard results in linear algebra (involving equivalence of matrices), 
and the action of $(A,B)\in G_{\0}$ on $X$, there exists $(A,B)\in G_{\0}$ such that $A^{-1}X^+B=Y^{+}$ where 
\[Y^+ = \left[
\begin{array}{c|c}
I_r  & 0 \\ \hline
 0 & 0
\end{array}\right] \]
where  $r$ is the rank of $X^+$. \\

\subsubsection{} The next step is to identify and later work with $(A,B)\in G_{\0}$ that centralize $Y^{+}$, which is equivalent to 
 the condition: $A^{-1}Y^+B=Y^{+}$. In order elaborate further,  it will be useful to consider $(A,B)$ in block matrix form
$$
A=\left[
\begin{array}{c|c}
A_{11}  & A_{12} \\ \hline
A_{21} & A_{22}
\end{array} \right] \quad 
B= \left[
\begin{array}{c|c}
B_{11}  & B_{21}\\ \hline
 B_{21} & B_{22}
\end{array}\right]. 
$$
The centralizing condition is equivalent to 
$$
\left[\begin{array}{c|c}
B_{11}  & B_{12} \\ \hline
 0 & 0
\end{array}\right] = \left[
\begin{array}{c|c}
A_{11}  & 0 \\ \hline
 A_{21} & 0
\end{array}\right].
$$
Therefore, one has 
$$
A=\left[
\begin{array}{c|c}
A_{11}  & A_{12} \\ \hline
 0 & A_{22}
\end{array}\right]\ \ B= \left[
\begin{array}{c|c}
A_{11}  & 0 \\ \hline
 B_{21} & B_{22}
\end{array}\right]. 
$$
Note we have a formula for $B^{-1}$ in terms of blocks:
$$B^{-1} = \left[
\begin{array}{c|c}
A_{11}^{-1}  & 0 \\ \hline
 -B_{22}^{-1}B_{21}A_{11}^{-1} & B_{22}^{-1}
\end{array}\right]. 
$$
We can now provide a formula for the action of $(A,B)$ in the centralizer of $Y^{+}$ on $Y^-$ in block form: 

\begin{equation*}
\begin{split}    
B^{-1}Y^-A &= \left[
\begin{array}{c|c}
A_{11}^{-1}  & 0 \\ \hline
 -B_{22}^{-1}B_{21}A_{11}^{-1} & B_{22}^{-1}
\end{array}\right]  \left[
\begin{array}{c|c}
Y_{11}^-  & Y_{12}^- \\ \hline
Y_{21}^- & Y_{22}^-
\end{array}\right] \left[
\begin{array}{c|c}
A_{11}  & A_{12} \\ \hline
 0 & A_{22}
\end{array}\right] \\
&= \left[
\begin{array}{c|c}
A_{11}^{-1}Y_{11}^-A_{11}  & A_{11}^{-1}(Y_{11}^-A_{12}+Y_{12}^-A_{22}) \\ \hline
 B_{22}^{-1}(Y_{21}^-A_{11}-B_{21}A_{11}^{-1}Y_{11}^-A_{11} ) & B_{22}^{-1}(Y_{21}^-A_{12}+Y_{22}^-A_{22})-B_{22}^{-1}B_{21}A_{11}^{-1}(Y_{11}^-A_{12}+Y_{12}^-A_{22})
\end{array}\right].
\end{split}
\end{equation*} 
As long as we work with $(A,B)\in G_{\0}$ that centralize $Y^{+}$ (i.e., $(A,B)\in c_{G_{\0}}(Y^{+})$), we can focus on transforming $Y^{-}$ into the desired form. 

\subsubsection{} Observe that $Y_{11}^-$ can be put into its Jordan form $J$ (upper triangular) via $A_{11}$. If one chooses $A_{11}$ to be in the centralizer of $J$, one can replace both $Y_{11}^-$ and $A_{11}^{-1}Y_{11}^-A_{11}$ with $J$  in the expression above. By operating with elements $(A,B)\in c_{G_{\0}}(Y^{+})$ with $A_{11}$ that centralizes $J$, our new expression for the action on $Y^{-}$ is

\[ B^{-1}Y^-A = \left[
\begin{array}{c|c}
J  & A_{11}^{-1}(JA_{12}+Y_{12}^-A_{22}) \\ \hline
 B_{22}^{-1}(Y_{21}^-A_{11}-B_{21}J ) & B_{22}^{-1}(Y_{21}^-A_{12}+Y_{22}^-A_{22})-B_{22}^{-1}B_{21}A_{11}^{-1}(JA_{12}+Y_{12}^-A_{22})
\end{array}\right]. \]

\subsubsection{} We also observe that all the eigenvalues of $J$ are zero since 

\[Y^+Y^- = \left[
\begin{array}{c|c}
I_r  & 0 \\ \hline
 0 & 0
\end{array}\right] \left[
\begin{array}{c|c}
J  & X_{12}^- \\ \hline
 Y_{21}^- & Y_{22}^-
\end{array}\right] = \left[
\begin{array}{c|c}
J  & Y_{12}^- \\ \hline
 0 & 0
\end{array}\right]. \]

The condition that $Tr((Y^+Y^-)^k)=0$ for $ k=1,\ldots, m$ where $m=\text{min}\{m,n\}$ implies that 
if $\lambda_{1},\lambda_{2},\dots,\lambda_{r}$ are the eigenvalues of $J$ then 
$\lambda_{1}^{k}+\lambda_{2}^{k}+\dots + \lambda_{r}^{k}=0$ for $k=1,2,\dots m$. Since $r\leq m$ this implies that 
$\lambda_{j}=0$ for all $j$. 

\subsubsection{} Let $J$ be a Jordan canonical form with Jordan blocks $J_{1}, J_{2},\dots, J_{t}$ of sizes $k_{1}\geq k_{2} \geq \dots \geq k_{t}$ with 
all Jordan blocks having eigenvalue zero: 
\[J = \begin{pmatrix} 
J_{1} & 0 & 0 & \cdots & 0 \\
0 & J_{2} & 0 & \cdots & 0 \\
\vdots & & \ddots &  & \\
0 & 0 & 0 & \cdots & 0\\
0 & 0 & 0 & \cdots & J_{t}
\end{pmatrix}.\]

Consider the action of $(A,B)\in c_{G_{\0}}(Y^{+})$ with $A_{11}$, $A_{22}$, and $B_{22}$ to be (appropriately sized) identity matrices. Then 

\[ B^{-1}Y^-A = \left[
\begin{array}{c|c}
J  & JA_{12}+Y_{12}^{-}\\ \hline
Y_{21}^--B_{21}J  & *
\end{array}\right]. \]

The matrix $JA_{12}$ (resp. $B_{21}J)$ consists of matrix entries with zeros in rows (resp. columns) $k_{1}$, $k_{1}+k_{2}$,\dots, $k_{1}+k_{2}+\dots +k_{t}$ 
(resp. $1$, $1+k_{1}$, $1+k_{1}+k_{2}$, \dots, $1+k_{1}+k_{2}+\dots+k_{t-1}$) and 
arbitrary entries in the other rows (resp. columns). Therefore, one can choose entries in $A_{12}$ (resp. $B_{21}$) to make 
$JA_{12}+Y_{12}^-$ (resp. $Y_{21}^--B_{21}J$) into a matrix with possibly non-zero entries in rows (resp. columns) $k_{1}$, $k_{1}+k_{2}$,\dots, $k_{1}+k_{2}+\dots +k_{t}$ 
(resp. $1$, $1+k_{1}$, $1+k_{1}+k_{2}$, \dots, $1+k_{1}+k_{2}+\dots+k_{t-1}$) and zeros in the other rows (resp. columns).

\subsubsection{}\label{ssec:3.2.6} Now consider the action of $(A,B)\in c_{G_{\0}}(Y^{+})$ with $A_{11}$ being the identity matrix, $A_{12}=0$ and $B_{21}=0$. Then 
\[ B^{-1}Y^-A = \left[
\begin{array}{c|c}
J  & Y_{12}^{-}A_{22}\\ \hline
B_{22}^{-1}Y_{21}^{-}  & *
\end{array}\right]. \]
Now one can make $A_{22}$ (resp. $B_{22}^{-1}$) into a product of a permutation matrix and elementary matrices to transform 
 $Y_{12}^{-}A_{22}$ (resp. $B_{22}^{-1}Y_{21}^{-}$) into a column (resp. row) echelon form with possibly non-zero entries in rows (resp. columns) $k_{1}$, $k_{1}+k_{2}$,\dots, $k_{1}+k_{2}+\dots +k_{t}$ (resp. $1$, $1+k_{1}$, $1+k_{1}+k_{2}$, \dots, $1+k_{1}+k_{2}+\dots+k_{t-1}$) and zeros in the other rows (resp. columns).

\subsubsection{} The next step is to transform $Y_{12}^{-}$ into a matrix $C_{r_1}$ of the form stated in the theorem.  
Let $(A,B)\in c_{G_{\0}}(Y^{+})$ with 
$A_{12}=0$, $B_{21}=0$ and $A_{22}$, $B_{22}$ being identity matrices. Then 

\[ B^{-1}Y^-A = \left[
\begin{array}{c|c}
A_{11}^{-1}JA_{11}  & A_{11}^{-1}Y_{12}^{-}\\ \hline
Y_{21}^-A_{11} & *
\end{array}\right]. \]

By using the action of $A_{11}\in c(J)$ (centralizer of $J$) without loss of generality we can assume that the pivots in $Y_{12}^{-}$ are all $1$'s. Normally we can transform $Y_{12}^{-}$ into a 
elementary row echelon form by making $A_{11}^{-1}$ into a product of elementary matrices that performs the row operations used in Gaussian elimination, and since $Y_{12}^-$ is already in column echelon form from the previous step, this is equivalent to the form of $C_{r_1}$. 
The issue is that this product of elementary matrices need not centralize $J$. This problem can be remedied for the cases of the elementary row operations of row addition and row scaling as follows.\footnote{The permutation matrices necessary to perform the row swapping operation of Gaussian elimination are also not in the centralizer of $J$, however there are only finitely many possible forms of $C_{r_1}$ up to permutation.} 

The matrix $Y_{12}^{-}$ is in column echelon form and has non-zero entries in rows $k_{1}$, $k_{1}+k_{2}$,\dots, $k_{1}+k_{2}+\dots +k_{t}$ with zeros in the other rows. 
Set $f(j)=k_{1}+k_{2}+\dots +k_{j}$,  $j=1,2,\dots, t$. Furthermore, 
the non-zero pivots can lie in matrix positions $(f(j),j)$ for $j=1,2,\dots, t$. We want to use $A_{11}^{-1}$ to clear the matrix entries in the columns 
directly below the pivots. If there was no restriction on $A_{11}^{-1}$ one can accomplish this with a product of elementary matrices. For example, 
if one wants to eliminate a non-zero entry $\alpha$ in position $(f(i),j)$, then one can apply the elementary matrix $E_{f(j),f(i)}(-\alpha)$ on the left which 
replaces Row $f(j)$ with $-\alpha \times$ Row $f(j)$+ Row $f(i)$. 

Consider the matrix $L_{f(j),f(i)}(-\alpha)$ which is in the centralizer of $J$ where 
$$L_{f(j),f(i)}(-\alpha)= \begin{pmatrix} 
I_{k_{1}} & 0 & 0 & \cdots & 0 \\
Z_{2,1} & I_{k_{2}} & 0 & \cdots & 0 \\
Z_{3,1} & Z_{3.2} & I_{k_{3}} &\dots  &0 \\
\vdots & \vdots  & \vdots & \ddots & 0\\
Z_{t,1} & Z_{t,2} & Z_{t,3} & \cdots & I_{k_{t}}.
\end{pmatrix}
$$
Here $I_{k_{j}}$ are $k_{j}\times k_{j}$ identity matrices and $Z_{j,i}$ are $k_{j} \times k_{i}$ matrices where $i=1,2,\dots,t-1$ and $j=2,3,\dots, t$. Moreover, 
set $Z_{j^{\prime},i^{\prime}}=0$ for $(j,i)\neq (j^{\prime},i^{\prime})$, and the block matrix 
$$Z_{j,i}= \begin{pmatrix} 
0 & -\alpha I_{k_{j}}
\end{pmatrix}.
$$
It can be directly verified that $L_{f(j),f(i)}(-\alpha)$ will have the same effect as $E_{f(j),f(i)}(-\alpha)$ on the Gaussian elimination process on $Y_{12}^{-}$. 

Similarly, it can be directly verified that the matrix
\[ M_{k_j} (\alpha) = \begin{pmatrix} 
I_{k_{1}} & 0 & 0 & \cdots & 0 \\
0 & \ddots & 0 & \cdots & 0 \\
0 & 0 & \alpha I_{k_j} &\dots  &0 \\
\vdots & \vdots  & \vdots & \ddots & 0\\
0 & 0 & 0 & \cdots & I_{k_{t}}.
\end{pmatrix} \]
will scale row $k_1+\ldots+k_j$ by $\alpha$ and also centralizes $J$.

Therefore, if the rank of the current iteration of $Y_{12}^{-}$ is $r_1$ with pivots in rows $i_1,\ldots, i_{r_1}$, then it is transformed to
\[ \begin{pmatrix} 
C_{r_1} & 0 & \cdots & 0
\end{pmatrix} \]
where
\[ C_{r_1} = \begin{pmatrix} 
e_{i_1} & e_{i_2} & \cdots & e_{i_{r_1}} 
\end{pmatrix} \]
and where $e_{i_p}$ is the column vector with a $1$ in the $i_p$-th row and zeroes elsewhere.  Furthermore, each index $i_p$ is an element of $\{k_1, k_1 + k_2, \ldots, k_1+ k_2 + \ldots + k_t \}$.  That is, the nonzero entry of each column $e_{i_p}$ occurs in one of the aforementioned rows.

\subsubsection{} We now perform that same procedure as in the last step to  to transform $Y_{21}^{-}$ into a matrix $R_{r_2}$ with form analogous to the transpose of $C_{r_1}$.  Recall from Section \ref{ssec:3.2.6} that we have already transformed $Y_{21}^-$ into row echelon form.
Let $(A,B)\in c_{G_{\0}}(Y^{+})$ with $A_{12}=0$, $B_{21}=0$ and $A_{22}$, $B_{22}$ being identity matrices. 
This time one makes $A_{11}$ into a product of matrices that are upper triangular, are in the centralizer of $J$, and perform 
column operations to clear out the entries in the pivot rows of all non-zero entries (except for the pivot). In this case each index $j_q$ is an element of $\{1+k_1, 1+k_1 + k_2, \ldots, 1 + k_1 + k_2 + \ldots + k_{t-1} \}$ and so the nonzero entry of each row occurs in one of these columns.

Note that in the process we have changed $Y_{12}^-$ out of the form of $C_{r_1}$. Consider $A_{11}$. Then $A_{11}^{-1}$ will still be upper triangular and $A_{11}^{-1}Y_{12}^{-}$ will be a matrix with $1$'s in the same pivot entries as $Y_{12}^-$ with possible non-zero entries above and to the right of the position of the pivot entries in $Y_{12}^-$. 
The next two steps will correct this issue. 

\subsubsection{} Now let $(A,B)\in c_{G_{\0}}(Y^{+})$ with $A_{11}$, $A_{22}$, and $B_{22}$ be identity matrices, and $B_{21}=0$.
Then 
\[ B^{-1}Y^-A = \left[
\begin{array}{c|c}
J  & JA_{12}+Y_{12}^{-}\\ \hline
Y_{21}^{-} & *
\end{array}\right]. \]
We can now transform $Y_{12}^{-}$ into a matrix in row echelon form by choosing $A_{12}$ to kill the non-zero entries in the ``non-pivot'' rows. Note that 
$Y_{21}^{-}$ is unchanged. 

\subsubsection{} Let $(A,B)\in c_{G_{\0}}(Y^{+})$ with $A_{11}$, $B_{22}$ being identity matrices, and $A_{12}=0$, $B_{21}=0$. 
Then 
\[ B^{-1}Y^-A = \left[
\begin{array}{c|c}
J  & Y_{12}^{-}A_{22}\\ \hline
Y_{21}^{-} & *
\end{array}\right]. \]
The matrix $A_{22}$ can be chosen to clear out the non-zero entries that are not pivots to make $Y_{12}^{-}$ in column echelon form. Again $Y_{21}^{-}$ is 
unchanged in the process.

\subsubsection{} The final step is to transform $Y_{22}^{-}$ into a matrix of the desired form. We will work with $(A,B)$ in the centralizer of $Y^{+}$ with 
$A_{11}$ centralizing $J$. Let $A_{11} = I$ and $A_{12}, B_{21} = 0$.   Then $Y_{22}^-$ is transformed to $B_{22}^{-1}Y_{22}^-A_{22}$.  
Let $\text{rank}(C_{r_1}) = r_1$ and 
$\text{rank}(R_{r_2}) = r_2$. 

Write
\[Y_{22}^- =   \begin{pmatrix} 
\xi_{11} & \xi_{12}  \\
\xi_{21} & \xi_{22}
\end{pmatrix}\]
where $\xi_{11}$ is a $r_2\times r_1$ matrix. Moreover, $\xi_{12}$ is $r_2 \times (n-r-r_1)$, $\xi_{21}$ is $(m-r-r_2) \times r_1$, and $\xi_{22}$ is $(m-r-r_2) \times (n-r-r_1)$.

Now to centralize $C_{r_1}$ and $R_{r_2}$ we require the first $r_1$ rows of $A_{22}$ to be the first $r_1$ rows of the identity matrix and similarly the first $r_2$ columns of $B_{22}^{-1}$ to be the first $r_2$ columns of the identity matrix.  Therefore we can write $A_{22}$ and $B_{22}^{-1}$ in block form (in a similar way with $Y_{22}^-$) as
\[A_{22} =   \begin{pmatrix} 
I_{r_1} & 0  \\
\alpha_{21} & \alpha_{22}
\end{pmatrix} \text{ and } B_{22}^{-1} =   \begin{pmatrix} 
I_{r_2} & \beta_{12}  \\
0 & \beta_{22}
\end{pmatrix}. \]
Then
\[B_{22}^{-1}Y_{22}^-A_{22} = \begin{pmatrix} 
\xi_{11}+\beta_{12}\xi_{21}+\xi_{12}\alpha_{21}+\beta_{12}\xi_{22}\alpha_{21} & \xi_{12}\alpha_{22}+\beta_{12}\xi_{22}\alpha_{22}  \\
\beta_{22}\xi_{21}+\beta_{22}\xi_{22}\alpha_{21} & \beta_{22}\xi_{22}\alpha_{22}
\end{pmatrix}.  \]
This shows that we can send $\xi_{22}$ to
\[ \begin{pmatrix} 
I_{s} & 0  \\
0 & 0
\end{pmatrix} \]
where $s = \text{rank}(\xi_{22}).$  

\subsubsection{} Now choose $(A,B)\in c_{G_{\0}}(Y^{+})$ with $A_{11}, A_{22}$ and $B_{22}^{-1}$ to be the identity, while allowing $A_{12}$ and $B_{21}$ to be free. 
This action will fix $\xi_{22}$ and image of $Y_{22}^{-}$ is 
\begin{equation}\label{eq:nexttolast}
-B_{21}JA_{12} - B_{21}Y_{12}^{-}+Y_{21}^{-}A_{12} + Y_{22}^{-}. 
\end{equation}

Set $B_{21}$ to be zero.  Let the pivots of $Y_{21}^{-}$ be in columns $j_1, \ldots, j_{r_2}$.  Then set the $j_1$-th row of $A_{12}$ to be the negative of the 1st row of $Y_{22}^{-}$, the $j_2$-th row of $A_{12}$ to be the negative of the 2nd row of $Y_{22}^{-}$ and so forth.  Then by (\ref{eq:nexttolast}), $(A,B)$ 
sends $\xi_{11}$ and $\xi_{12}$ to 0.

\subsubsection{} Finally, we need to choose $(A,B)\in c_{G_{\0}}(Y^{+})$ that stabilizes the current $Y^{-}_{11}, Y^{-}_{12}, Y^{-}_{21}$, $\xi_{11}, 
\xi_{12}, \xi_{22}$ and sends  $\xi_{21}$ to 0.  This can be accomplished by  setting  $A_{11}, A_{22}$ and $B_{22}^{-1}$ to be the identity, and $A_{12}$ to be zero.  
If the pivots of $Y_{12}^{-}$ are in rows $i_1, \ldots, i_{r_1}$.  Then set the $i_1$-th column of $B_{21}$ to be the 1st column of 
$Y_{22}^{-}$, the $i_2$-th column of $B_{21}$ to be the 2nd column of $Y_{22}^{-}$ and so forth. This concludes the proof of Theorem~\ref{t:glmn case}.


\section{$G_{\0}$-orbits on $\mathcal{N}$: general case}\label{S:orbitsgeneral}

\subsection{} In the ordinary Lie algebra case, the adjoint action of the algebraic group $G$ on $\mathfrak{g}$ is known to have finitely many nilpotent orbits via Richardson's Theorem (see \cite[Theorem 3.8]{Hum}).  In this section, we prove an appropriate generalization for Lie superalgebras.  We begin by stating the following lemma whose proof can be found in \cite[Section 2.4]{Jan2}.

\begin{lemma} \label{L:preRichardson}
Let $G$ be an algebraic group and let $H$ be a closed subgroup of $G$.  Let $X$ be a $G$-variety and let $Y$ be a closed and $H$-invariant subvariety of $X$.  Suppose that for all $y \in Y$, 
\begin{equation} \label{eq:tanspace}
T_y(G\cdot y) \cap T_y(Y) \subseteq (d\pi_y)_{id}(\operatorname{Lie }H), 
\end{equation}
where $\pi_y: G \to G\cdot y$ sends $g$ to $gy$.
Then the intersection with $Y$ of each $G$-orbit in $X$ is a finite union of $H$-orbits.
\end{lemma}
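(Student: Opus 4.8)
The plan follows Richardson's method. Fix a single $G$-orbit $\mathcal{O}=G\cdot x$ in $X$; it suffices to show that $\mathcal{O}\cap Y$ is a finite union of $H$-orbits. Since orbits of an algebraic group action are locally closed and $Y$ is closed, $\mathcal{O}\cap Y$ is a locally closed subvariety of $X$, hence a Noetherian topological space; it is $H$-stable because $\mathcal{O}$ is $G$-stable and $Y$ is $H$-stable. The strategy is to prove that \emph{every} $H$-orbit contained in $\mathcal{O}\cap Y$ is open in $\mathcal{O}\cap Y$: then these orbits partition $\mathcal{O}\cap Y$ into pairwise disjoint open sets, each of which is therefore also closed (its complement being the union of the others), and quasi-compactness of the Noetherian space $\mathcal{O}\cap Y$ forces the partition to be finite.

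Fix $y\in\mathcal{O}\cap Y$, so that $G\cdot y=\mathcal{O}$. The heart of the argument is a tangent-space computation inside $T_y(X)$. By functoriality of the tangent space under the locally closed immersions $\mathcal{O}\cap Y\hookrightarrow G\cdot y$ and $\mathcal{O}\cap Y\hookrightarrow Y$, one has $T_y(\mathcal{O}\cap Y)\subseteq T_y(G\cdot y)\cap T_y(Y)$. On the other hand $H\cdot y$ is a subvariety of $\mathcal{O}\cap Y$ through $y$, so $T_y(H\cdot y)\subseteq T_y(\mathcal{O}\cap Y)$, and one always has $(d\pi_y)_{id}(\operatorname{Lie }H)\subseteq T_y(H\cdot y)$. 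Chaining these with hypothesis \eqref{eq:tanspace} gives
\[ T_y(H\cdot y)\ \subseteq\ T_y(\mathcal{O}\cap Y)\ \subseteq\ T_y(G\cdot y)\cap T_y(Y)\ \subseteq\ (d\pi_y)_{id}(\operatorname{Lie }H)\ \subseteq\ T_y(H\cdot y), \]
so all four spaces coincide; in particular $T_y(\mathcal{O}\cap Y)=T_y(H\cdot y)$.

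Because the orbit $H\cdot y$ is smooth, $\dim_k T_y(H\cdot y)=\dim(H\cdot y)$, hence $\dim_k T_y(\mathcal{O}\cap Y)=\dim(H\cdot y)$. Combining this with the elementary bounds $\dim(H\cdot y)\le\dim_y(\mathcal{O}\cap Y)\le\dim_k T_y(\mathcal{O}\cap Y)$ forces equality throughout, so $\mathcal{O}\cap Y$ is smooth at $y$ with local dimension $\dim(H\cdot y)$. Since $\mathcal{O}\cap Y$ is smooth at $y$, we may choose an open irreducible neighbourhood $U$ of $y$ in $\mathcal{O}\cap Y$, and then $\dim U=\dim_y(\mathcal{O}\cap Y)=\dim(H\cdot y)=\dim(H^{\circ}\cdot y)$, where $H^{\circ}$ is the identity component (so $H^{\circ}\cdot y$ is the irreducible component of $H\cdot y$ through $y$, of the same dimension). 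The set $H^{\circ}\cdot y\cap U$ is a nonempty open subset of the irreducible variety $H^{\circ}\cdot y$, hence irreducible of dimension $\dim U$, so it is dense in $U$; being also locally closed in $U$, it must be open in $U$, hence open in $\mathcal{O}\cap Y$. As $H\cdot y$ is a finite union of $H$-translates of $H^{\circ}\cdot y$, each of them open, $H\cdot y$ is open in $\mathcal{O}\cap Y$, and the finiteness conclusion follows from the quasi-compactness argument of the first paragraph.

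The main obstacle is the tangent-space chain in the second paragraph: this is the only place where the hypothesis \eqref{eq:tanspace} is used, and one must be careful that the inclusion $T_y(\mathcal{O}\cap Y)\subseteq T_y(G\cdot y)\cap T_y(Y)$ is legitimate — it rests precisely on the fact that $\mathcal{O}\cap Y$ sits as a locally closed subvariety of both $G\cdot y$ and $Y$ inside $X$, so its tangent space injects into each. Everything else is standard input: local closedness and smoothness of orbits, the inequality $\dim_y V\le\dim_k T_y V$, the behaviour of dimension under passage to a dense locally closed subset, and quasi-compactness of Noetherian topological spaces.
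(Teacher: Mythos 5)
Your proof is correct and is essentially the argument the paper delegates to the cited reference \cite[Section 2.4]{Jan2} (the paper itself gives no proof): the tangent-space chain forcing $T_y(\mathcal{O}\cap Y)=T_y(H\cdot y)$, the smoothness/dimension count at $y$, and the openness of each $H$-orbit in the Noetherian space $\mathcal{O}\cap Y$. The only cosmetic point is that the openness of $H\cdot y$ should be deduced by translating the open neighbourhood $H^{\circ}\cdot y\cap U$ of $y$ around by elements of $H$, which act as homeomorphisms of $\mathcal{O}\cap Y$ --- the homogeneity step your phrase ``each of them open'' implicitly relies on.
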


\subsection{Generalization of Richardson's Theorem} We can now state a generalization of Richardson's theorem in the context of Lie superalgebras. 

\begin{theorem} \label{T:superrich}
 Let $G_{\0}$ be a closed subgroup of some $GL_m(\C) \times GL_n(\C)$.  Let $\mathfrak{g} = \mathfrak{g}_{\0} \oplus \mathfrak{g}_{\1}$ be a Lie superalgebra with 
 $\operatorname{Lie }G_{\0} = \mathfrak{g}_{\0}$.  Suppose there exists a supersubspace $M \subseteq \mathfrak{gl}(m | n)$ such that $\mathfrak{gl}(m | n) = M \oplus \mathfrak{g} $ and $[\mathfrak{g}, M] \subseteq M$.  Then the intersection with $\mathfrak{g}$ of each $GL_m(\C) \times GL_n(\C)$-orbit in $\mathfrak{gl}(m \lvert n)$ is a union of finitely many $G_0$-orbits. 
\end{theorem}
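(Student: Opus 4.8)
The plan is to apply Lemma \ref{L:preRichardson} with the dictionary $G = GL_m(\C)\times GL_n(\C)$, $H = G_{\0}$, $X = \mathfrak{gl}(m|n)$, and $Y = \mathfrak{g}$. The group $G$ acts on $\mathfrak{gl}(m|n)$ by the adjoint (conjugation) action, and $\mathfrak{g}$ is a subspace of $\mathfrak{gl}(m|n)$ that is $G_{\0}$-invariant: this follows because $\operatorname{Lie}G_{\0} = \mathfrak{g}_{\0}$ acts on $\mathfrak{g}$ preserving it (it is a Lie superalgebra), and connectivity of $G_{\0}$ (or, if not assumed, replacing $G_{\0}$ by its identity component, which changes orbits only by a finite factor) upgrades this to invariance under the group. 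Since $\mathfrak{g}$ is a linear subspace it is automatically closed, so the standing hypotheses of Lemma \ref{L:preRichardson} hold. If the conclusion of the lemma is verified, we get that $\mathfrak{g}\cap(G\cdot x)$ is a finite union of $G_{\0}$-orbits for every $x$, which is exactly the assertion.

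The real content is checking the tangent-space condition \eqref{eq:tanspace}: for every $y\in\mathfrak{g}$,
\[
T_y(G\cdot y)\cap T_y(\mathfrak{g}) \subseteq (d\pi_y)_{\mathrm{id}}(\operatorname{Lie}G_{\0}).
\]
First I would identify the three spaces concretely. The tangent space to the orbit at $y$ is $T_y(G\cdot y) = [\mathfrak{gl}(m|n), y] = \{[Z,y] : Z\in\mathfrak{gl}(m|n)\}$, since $\pi_y(g) = \operatorname{Ad}(g)y$ has differential $Z\mapsto [Z,y]$ at the identity. The tangent space $T_y(\mathfrak{g})$ is just $\mathfrak{g}$ itself, as $\mathfrak{g}$ is a linear subspace. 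And $(d\pi_y)_{\mathrm{id}}(\operatorname{Lie}G_{\0}) = [\mathfrak{g}_{\0}, y]$. So the condition to verify reduces to the purely algebraic statement
\[
[\mathfrak{gl}(m|n),\, y]\,\cap\,\mathfrak{g}\ \subseteq\ [\mathfrak{g}_{\0},\, y]\qquad\text{for all }y\in\mathfrak{g}.
\]

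To prove this, take $w = [Z,y]$ with $Z\in\mathfrak{gl}(m|n)$ and suppose $w\in\mathfrak{g}$. Using the decomposition $\mathfrak{gl}(m|n) = M\oplus\mathfrak{g}$, write $Z = Z_M + Z_{\mathfrak g}$ with $Z_M\in M$, $Z_{\mathfrak g}\in\mathfrak{g}$. Then $[Z,y] = [Z_M,y] + [Z_{\mathfrak g},y]$; the hypothesis $[\mathfrak{g},M]\subseteq M$ gives $[Z_M,y] = -[y,Z_M]\in M$ (up to the sign from supercommutativity, which does not matter), while $[Z_{\mathfrak g},y]\in\mathfrak{g}$ since $\mathfrak{g}$ is a subalgebra. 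Since $w = [Z_M,y]+[Z_{\mathfrak g},y]$ lies in $\mathfrak{g}$ and the sum $M\oplus\mathfrak{g}$ is direct, the $M$-component $[Z_M,y]$ must vanish, hence $w = [Z_{\mathfrak g},y]\in[\mathfrak{g},y]$. This already shows $[\mathfrak{gl}(m|n),y]\cap\mathfrak{g}\subseteq[\mathfrak{g},y]$; it remains to pass from $[\mathfrak{g},y]$ to $[\mathfrak{g}_{\0},y]$. Here I would use that $\pi_y$ is the orbit map of the connected group $G_{\0}$ and that $\operatorname{Lie}G_{\0} = \mathfrak{g}_{\0}$, so $(d\pi_y)_{\mathrm{id}}(\operatorname{Lie}G_{\0}) = [\mathfrak{g}_{\0},y]$ — but note $w\in\mathfrak{g}_{\1}$ forces $[Z_{\mathfrak g},y]$ to have the parity of $w$ relative to $y$; writing $Z_{\mathfrak g} = (Z_{\mathfrak g})_{\0} + (Z_{\mathfrak g})_{\1}$ and matching parities shows only the component of $Z_{\mathfrak g}$ in $\mathfrak{g}_{\0}$ (when $y$ and $w$ have the same parity, e.g. $y\in\mathfrak{g}_{\1}$ as in our application to $\mathcal N$) contributes, so $w\in[\mathfrak{g}_{\0},y]$. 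I expect the main obstacle to be exactly this last parity bookkeeping and making sure Lemma \ref{L:preRichardson} is being invoked with the correct (connected) group $H$; the core direct-sum argument with $[\mathfrak{g},M]\subseteq M$ is short and is the heart of why the hypothesis is shaped the way it is.
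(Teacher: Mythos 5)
Your overall strategy is the paper's: invoke Lemma~\ref{L:preRichardson} with $G = GL_m(\C)\times GL_n(\C)$, $H=G_{\0}$, $X=\mathfrak{gl}(m|n)$, $Y=\mathfrak{g}$, and verify the tangent-space condition using the decomposition $\mathfrak{gl}(m|n)=M\oplus\mathfrak{g}$ together with $[\mathfrak{g},M]\subseteq M$. But there is a genuine gap in the execution, and it starts with your identification of the orbit tangent space. The group $G=GL_m(\C)\times GL_n(\C)$ is an ordinary algebraic group whose Lie algebra is $\mathfrak{gl}(m)\oplus\mathfrak{gl}(n)=\mathfrak{gl}(m|n)_{\0}$, so $T_y(G\cdot y)=[\mathfrak{gl}(m)\oplus\mathfrak{gl}(n),\,y]$, \emph{not} $[\mathfrak{gl}(m|n),y]$: odd elements of $\mathfrak{gl}(m|n)$ do not exponentiate into $G$ and do not contribute to the differential of the orbit map. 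Because of this you set out to prove the stronger inclusion $[\mathfrak{gl}(m|n),y]\cap\mathfrak{g}\subseteq[\mathfrak{g}_{\0},y]$, which is false in general: for $y\in\mathfrak{g}_{\1}$ with $[y,y]\neq 0$ the element $[y,y]$ lies in $[\mathfrak{g},y]\cap\mathfrak{g}_{\0}$, while $[\mathfrak{g}_{\0},y]\subseteq\mathfrak{g}_{\1}$. Your attempted repair by ``parity bookkeeping'' only treats homogeneous $y$ and $w$ of matching parity (you even restrict to ``$y\in\mathfrak{g}_{\1}$ as in our application''), but the lemma must be verified for \emph{every} $y\in Y=\mathfrak{g}$, homogeneous or not, since the theorem concerns the intersection of arbitrary $G$-orbits with all of $\mathfrak{g}$; so as written the passage from $[\mathfrak{g},y]$ to $[\mathfrak{g}_{\0},y]$ does not go through.

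The fix is exactly the point you missed, and it is how the paper argues: since $T_y(G\cdot y)=[\mathfrak{gl}(m|n)_{\0},y]$ (e.g.\ by \cite[Prop.~9.1]{Bor}), decompose only the even part, $\mathfrak{gl}(m|n)_{\0}=M_{\0}\oplus\mathfrak{g}_{\0}$, so that
\begin{equation*}
[\mathfrak{gl}(m|n)_{\0},y]=[M_{\0},y]+[\mathfrak{g}_{\0},y]\subseteq M+[\mathfrak{g}_{\0},y],
\end{equation*}
using $[\mathfrak{g},M]\subseteq M$ (note $M$ is a supersubspace, so $M_{\0}\subseteq M$ and $[M_{\0},y]\subseteq M$ even when $y$ is not homogeneous). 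Then $M\cap\mathfrak{g}=\{0\}$ and $[\mathfrak{g}_{\0},y]\subseteq\mathfrak{g}$ give $T_y(G\cdot y)\cap\mathfrak{g}\subseteq[\mathfrak{g}_{\0},y]$ for all $y\in\mathfrak{g}$, with no parity analysis needed. Your direct-sum argument is the right mechanism; it just has to be applied to brackets with even elements only, which is all the lemma requires. (Your remarks about $G_{\0}$-invariance/connectedness of $\mathfrak{g}$ as the closed $H$-stable subvariety are fine and are implicit in the paper's setup.)
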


\begin{proof}
Assume such a complement $M$ exists.  We show that condition (\ref{eq:tanspace}) of Lemma~\ref{L:preRichardson} is satisfied where $X = \mathfrak{gl}(m | n)$, $Y=\mathfrak{g}$, $G=GL_m(\C) \times GL_n(\C)$, and $H = G_{\0}$.  This means one must show for every $y \in \mathfrak{g}$
\[ T_y(G \cdot y) \cap T_y (\mathfrak{g}) \subseteq (d\pi_y)_{id}(\mathfrak{g}_{\0}).  \]
We observe that $T_y(\mathfrak{g}) = \mathfrak{g}$ and $(d\pi_y)_{id}(\mathfrak{g}_{\0}) = [\mathfrak{g}_{\0}, y]$.  

Now showing (\ref{eq:tanspace}) is equivalent in our case to showing 
\begin{equation} \label{eq:tanspace2}
T_y (G \cdot y) \cap \mathfrak{g} \subseteq [\mathfrak{g}_{\0}, y], 
\end{equation} 
for every $y \in \mathfrak{g}.$  

Note that $\text{Lie }G_x =(\mathfrak{gl}(m)\times \mathfrak{gl}(n))_x$,  and by a standard fact in \cite[Prop. 9.1]{Bor} this is equivalent to $T_y(G \cdot y) = [\mathfrak{gl}(m)\times \mathfrak{gl}(n), y]$.\\

Now apply the complement condition to obtain that
\[ [\mathfrak{gl}(m)\times \mathfrak{gl}(n), y] = [M_{\0} \oplus \mathfrak{g}_{\0},y] = [M_{\0},y] + [\mathfrak{g}_{\0},y].\]
By assumption $[M_{\0},y] \subseteq M$, so (\ref{eq:tanspace2}) becomes
\[T_y(G \cdot y) \cap \mathfrak{g} \subseteq (M + [\mathfrak{g}_{\0},y]) \cap \mathfrak{g} \]
and since $M \cap \mathfrak{g} = \{0\}$ and $[{\mathfrak g}_{\0},y]\subseteq {\mathfrak g}$, this reduces to
$$ 
T_y(G \cdot y) \cap \mathfrak{g} \subseteq [\mathfrak{g}_{\0},y]. 
$$
So (\ref{eq:tanspace2}) is satisfied and the result follows by Lemma~\ref{L:preRichardson}.
\end{proof}

\subsection{} \label{SS:embeddings}  In the case when Richardson's Theorem is applied to show the finiteness of $G$-orbits for the nilpotent cone for complex semisimple Lie algebras, 
the existence of such an $M$ is guaranteed via complete reducibility. More specifically, this follows from regarding $\mathfrak{gl}(n)$ as a a $G$-module under the adjoint 
action.  Then $\mathfrak{g}=\text{Lie }G$ is a submodule and therefore has a vector space complement $M$ in $\mathfrak{gl}(n)$ that is invariant under the adjoint action of $\mathfrak{g}$. 

In the situation for Lie superalgebras, we can apply this same reasoning to $G_{\0}$ acting on $\mathfrak{g}_{\0}$ to produce an $M_{\0}$ satisfying $\mathfrak{gl}(m \lvert n)_{\0} = \mathfrak{g}_{\0} \oplus M_{\0}$ and $G_{\0} \cdot M_{\0} \subseteq M_{\0}$.  Then $M_{\0}$ will also be invariant under the derived $\mathfrak{g}_{\0}$ action, so that $[\mathfrak{g}_{\0},M_{\0}] \subseteq M_{\0}$. However, an issue arises when considering $\mathfrak{g}_{\1}$.  We can still regard $\mathfrak{g}_{\1}$ as a $G_{\0}$-module and produce a complement $M_{\1}$, but since the derived action involves only $\mathfrak{g}_{\0}$, we know nothing about $[\mathfrak{g}_{\1}, M_{\1}]$.   

In order to prove finiteness of orbits for ${\mathcal N}$ in the superalgebra case we construct $M_{\1}$ in a case-by-case manner 
and show directly that $[\mathfrak{g}_{\1}, M_{\1}] \subseteq M_{\0}$.  Details can be found in Section~\ref{S:complements} (Appendix).  It is worth noting that the methods we use to produce these complements are analogous to the methods used to produce complements in the characteristic $p$ case of Richardson's Theorem.  Therefore, even though all of the Lie superalgebras considered here are over $\C$, we still need to use ideas from the characteristic $p$ case in order to produce compatible complements.

\subsection{} We can now verify the finiteness of $G_{\0}$-orbits on ${\mathcal N}$. 

\begin{theorem} \label{T:finiteness N}  Let ${\mathfrak g}$ be a classical simple Lie superalgebra over ${\mathbb C}$. Then 
${\mathcal N}$ has finitely many $G_{\0}$-orbits. 
\end{theorem}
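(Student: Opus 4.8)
The plan is to deduce Theorem~\ref{T:finiteness N} from Theorem~\ref{T:superrich} together with a case-by-case construction of a suitable complement $M = M_{\0} \oplus M_{\1}$ to $\mathfrak{g}$ inside some $\mathfrak{gl}(m|n)$, and from the already-established $\mathfrak{gl}(m|n)$ case (Theorem~\ref{t:glmn case}). First I would recall the standard embeddings: each classical simple Lie superalgebra $\mathfrak{g}$ in the list (the types $\mathfrak{sl}(m|n)$, $\mathfrak{psl}(n|n)$, $\mathfrak{osp}(m,n)$, $D(2,1,\alpha)$, $G(3)$, $F(4)$, $\mathfrak{q}(n)$, $\mathfrak{psq}(n)$, $\mathfrak{p}(n)$, $\widetilde{\mathfrak{p}}(n)$) sits inside an appropriate $\mathfrak{gl}(m|n)$ with $\mathfrak{g}_{\0} = \operatorname{Lie}G_{\0}$ and $G_{\0}$ realized as a closed subgroup of $GL_m(\C)\times GL_n(\C)$. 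For $\mathfrak{gl}(m|n)$ itself the statement is exactly Theorem~\ref{t:glmn case}(a), so that case is done. For the subquotient cases $\mathfrak{psl}(n|n)$ and $\mathfrak{psq}(n)$ one handles them by passing from $\mathfrak{sl}(n|n)$ (resp.\ $\mathfrak{q}(n)$) through the center, since the center lies in the even part and orbits on $\mathcal{N}$ descend; I would note this reduction explicitly.

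Next I would invoke Theorem~\ref{T:superrich}: provided we can exhibit a supersubspace $M \subseteq \mathfrak{gl}(m|n)$ with $\mathfrak{gl}(m|n) = M \oplus \mathfrak{g}$ and $[\mathfrak{g}, M] \subseteq M$, it follows that $\mathfrak{g} \cap \mathcal{O}$ is a finite union of $G_{\0}$-orbits for every $GL_m(\C)\times GL_n(\C)$-orbit $\mathcal{O}$ in $\mathfrak{gl}(m|n)$. As explained in Section~\ref{SS:embeddings}, complete reducibility of the $G_{\0}$-action on $\mathfrak{gl}(m|n)_{\0}$ produces an even complement $M_{\0}$ with $G_{\0}\cdot M_{\0}\subseteq M_{\0}$, hence $[\mathfrak{g}_{\0}, M_{\0}] \subseteq M_{\0}$; the genuinely new input is an odd complement $M_{\1}$ to $\mathfrak{g}_{\1}$ in $\mathfrak{gl}(m|n)_{\1}$ satisfying $[\mathfrak{g}_{\1}, M_{\1}] \subseteq M_{\0}$ and $[\mathfrak{g}_{\0}, M_{\1}]\subseteq M_{\1}$. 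These are constructed type-by-type in the Appendix (Section~\ref{S:complements}); I would simply cite that construction here and observe that together with $M_{\0}$ it gives the required $M$. Then one combines: the full nilpotent cone $\mathcal{N}_{\mathfrak{gl}(m|n)}$ has finitely many $GL_m\times GL_n$-orbits by Theorem~\ref{t:glmn case}(a), each of which meets $\mathfrak{g}$ in finitely many $G_{\0}$-orbits by the above.

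The remaining point is to verify that $\mathcal{N}_{\mathfrak{g}} \subseteq \mathcal{N}_{\mathfrak{gl}(m|n)} \cap \mathfrak{g}_{\1}$, so that the finiteness on the ambient nilpotent cone actually controls $\mathcal{N}_{\mathfrak{g}}$. This holds because restriction of $G_{\0}$-invariant functions: any $(GL_m\times GL_n)$-invariant polynomial with zero constant term on $\mathfrak{gl}(m|n)_{\1}$ restricts to a $G_{\0}$-invariant polynomial with zero constant term on $\mathfrak{g}_{\1}$, hence $\mathcal{N}_{\mathfrak{g}} \subseteq Z\big(S^{\bullet}(\mathfrak{gl}(m|n)_{\1}^{*})^{GL_m\times GL_n}_{+}\big)\cap \mathfrak{g}_{\1} = \mathcal{N}_{\mathfrak{gl}(m|n)}\cap\mathfrak{g}_{\1}$. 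Therefore $\mathcal{N}_{\mathfrak{g}}$ is a $G_{\0}$-invariant subset of the finite union of pieces $\mathcal{N}_{\mathfrak{gl}(m|n)}\cap\mathfrak{g}_{\1}$, each a finite union of $G_{\0}$-orbits, so $\mathcal{N}_{\mathfrak{g}}$ itself is a finite union of $G_{\0}$-orbits.

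The main obstacle is precisely the construction of the odd complement $M_{\1}$ with $[\mathfrak{g}_{\1}, M_{\1}] \subseteq M_{\0}$: unlike the even part, there is no abstract complete-reducibility argument forcing compatibility with the bracket of two odd elements, so this must be done explicitly for each family, and for the exceptional superalgebras $D(2,1,\alpha)$, $G(3)$, $F(4)$ and the type $P$ cases this requires some care with the specific matrix realizations. I expect the $\mathfrak{osp}$ and $\mathfrak{q}$ cases to be routine (the transpose/anti-automorphism giving a natural complement), with the exceptional and type $P$ cases demanding the bulk of the verification; all of this is deferred to the Appendix.
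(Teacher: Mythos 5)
Your strategy for the non-exceptional families coincides with the paper's: embed $\mathfrak{g}$ into a suitable $\mathfrak{gl}(m|n)$, invoke the super Richardson theorem (Theorem~\ref{T:superrich}) via the complements $M$ constructed in the Appendix, use the finiteness of orbits on $\mathcal{N}_{\mathfrak{gl}(m|n)}$ from Theorem~\ref{t:glmn case}(a), and check $\mathcal{N}_{\mathfrak{g}}\subseteq \mathcal{N}_{\mathfrak{gl}(m|n)}\cap\mathfrak{g}_{\1}$ by restricting invariants. That part is correct, and in fact slightly leaner than the paper's argument: you only need the one inclusion $\mathcal{N}_{\mathfrak{g}}\subseteq \mathcal{N}_{\mathfrak{g}'}$, whereas the paper proves the equality $\mathcal{N}_{\mathfrak{g}'}\cap\mathfrak{g}_{\1}=\mathcal{N}_{\mathfrak{g}}$ (the reverse inclusion, which uses the complement $M$, is what gets reused later for Theorem~\ref{T:inclusion}). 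Your remark on handling $\mathfrak{psl}(n|n)$ and $\mathfrak{psq}(n)$ through the even center is also consistent with the setup, since the center acts trivially on $\mathfrak{g}_{\1}$ and the invariants and orbits are unchanged.

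The genuine gap is your treatment of $D(2,1,\alpha)$, $G(3)$ and $F(4)$. You fold them into the same embedding-plus-complement argument and defer the construction of $M_{\1}$ to the Appendix, but no such complements are constructed there (Table~\ref{table:1} covers only $\mathfrak{sl}$, $\mathfrak{osp}$, $\mathfrak{q}$, $\mathfrak{p}$), and producing them is not routine: there is no preferred small matrix realization for generic $\alpha$, and the natural candidate, the orthogonal complement with respect to an invariant trace form of a faithful representation, is unavailable in general (for instance $D(2,1,\alpha)$ has identically vanishing Killing form, so the adjoint embedding gives no such splitting). The paper explicitly excludes these three cases from the Richardson-type argument and handles them separately: for $D(2,1,\alpha)$ it uses that the $G_{\0}$-action on $\mathfrak{g}_{\1}=V\boxtimes V\boxtimes V$ is independent of $\alpha$, so the finiteness for $\mathfrak{osp}(4|2)=D(2,1,1)$ transfers to all $\alpha$; for $G(3)$ (and similarly $F(4)$) it gives a direct orbit analysis on $\mathfrak{g}_{\1}=V\boxtimes Z$ using transitivity of $SL_2$ and $G_2$ on highest weight vectors, the Bruhat decomposition for $G_2$, and the degree-four invariant to rule out the lowest weight direction, with the conclusion also available from Kac's Table~IV. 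To complete your proof you must either exhibit compatible complements for these three superalgebras (not done in the paper, and in doubt for $D(2,1,\alpha)$) or replace that step with separate arguments of this kind.
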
 

\begin{proof} For classical simple Lie superalgebras ${\mathfrak g}$ other than $D(2,1,\alpha)$, $G(3)$ and $F(4)$, there exists an embedding 
${\mathfrak g}\hookrightarrow {\mathfrak g}^{\prime}\cong \mathfrak{gl}(m|n)$ and a supersubspace $M \subseteq {\mathfrak g}^{\prime}$ such that ${\mathfrak g}^{\prime}= M \oplus \mathfrak{g} $ and $[\mathfrak{g}, M] \subseteq M $. The embeddings and complements are described in Section~\ref{SS:embeddings} and ~\ref{S:complements}. 

Next we need to show that ${\mathcal N}_{\mathfrak g}\subseteq {\mathcal N}_{{\mathfrak g}^{\prime}}$. We will prove a stronger statement that ${\mathcal N}_{{\mathfrak g}^{\prime}}\cap {\mathfrak g}_{\1}= {\mathcal N}_{\mathfrak g}$. 
First we prove that ${\mathcal N}_{\mathfrak g}\subseteq {\mathcal N}_{{\mathfrak g}^{\prime}}\cap {\mathfrak g}_{\1}$. Let 
$z\in {\mathcal N}_{\mathfrak g}$, then $f(z)=0$ for every $f(x)\in S^{\bullet}({\mathfrak g}_{\1}^{*})^{G_{\0}}$.  
We have the identifications:
\begin{equation}
S^{\bullet}(({\mathfrak g}_{\1}^{\prime})^{*})^{G_{\0}^{\prime}}\subseteq S^{\bullet}(({\mathfrak g}_{\1}^{\prime})^{*})^{G_{\0}}=[S^{\bullet}({\mathfrak g}_{\1}^{*})\otimes S^{\bullet}(M_{\1}^{*})]^{G_{\0}}.
\end{equation} 
Under this identification, one can regard $g(x)=g(p,q)\in S^{\bullet}(({\mathfrak g}_{\1}^{\prime})^{*})^{G_{\0}^{\prime}}$. If $z\in {\mathcal N}_{\mathfrak g}\subseteq {\mathfrak g}_{\1}$ then 
$g(z)=g(z,0)=0$, thus $z\in {\mathcal N}_{{\mathfrak g}^{\prime}}$. 

The other inclusion, ${\mathcal N}_{{\mathfrak g}^{\prime}}\cap {\mathfrak g}_{\1}\subseteq {\mathcal N}_{\mathfrak g}$ uses properties of the embedding described in the first paragraph. One has 
$S^{\bullet}({\mathfrak g}_{\1}^{*})^{G_{\0}}=S^{\bullet}({\mathfrak g}_{\1}^{*})^{{\mathfrak g}_{\0}}$, and it will be more convenient to use 
Lie algebra invariants. We have 
\begin{equation}\label{E:includepoly1}
S^{\bullet}(({\mathfrak g}_{\1}^{\prime})^{*})^{{\mathfrak g}_{\0}^{\prime}}=[S^{\bullet}({\mathfrak g}_{\1}^{*})\otimes S^{\bullet}(M_{\1}^{*})]^{{\mathfrak g}_{\0}^{\prime}}\subseteq [S^{\bullet}({\mathfrak g}_{\1}^{*})\otimes S^{\bullet}(M_{\1}^{*})]^{{\mathfrak g}_{\0}}, 
\end{equation} 
and 
\begin{equation} \label{E:includepoly2}
S^{\bullet}({\mathfrak g}_{\1}^{*})^{{\mathfrak g}_{\0}}\subseteq [S^{\bullet}({\mathfrak g}_{\1}^{*})\otimes S^{\bullet}(M_{\1}^{*})]^{{\mathfrak g}_{\0}}.
\end{equation} 
The inclusion (\ref{E:includepoly2}) is given by $f(x)\mapsto f(x)\otimes 1$. Let $h(x)\in S^{\bullet}({\mathfrak g}_{\1}^{*})^{{\mathfrak g}_{\0}}$. 
Let $p=g_{\0}+m_{\0}\in {\mathfrak g}^{\prime}_{\0}$ where $g_{\0}\in {\mathfrak g}_{\0}$ and 
$m_{\0}\in M_{\0}$. Using the inclusion in (\ref{E:includepoly2}) and the fact that $[M_{\0},{\mathfrak g}_{\1}]\subseteq M_{\1}$, it follows that  
$$p.h(x)=-h([g_{\0},x]+[m_{\0},x])=-h([g_{\0},x])=g_{\0}.h(x)=0.$$
From (\ref{E:includepoly1}), if $z\in {\mathcal N}_{{\mathfrak g}^{\prime}}\cap {\mathfrak g}_{\1}$ then $z\in {\mathcal N}_{\mathfrak g}$. 

We can now prove the finiteness of $G_{\0}$-orbits on ${\mathcal N}:={\mathcal N}_{\mathfrak g}$. Let $G_{\0}\cdot y \in {\mathcal N}$. Set 
$G_{\0}^{\prime}=GL_{m}({\mathbb C})\times GL_{n}({\mathbb C})$. Then $G_{\0}^{\prime}\cdot y$ contains $G_{\0}\cdot y$, and $y\in {\mathcal N}_{{\mathfrak g}^{\prime}}$. 
Now the finiteness of $G_{\0}$-orbits on ${\mathcal N}$ follows from the finiteness of orbits for the nilpotent cone of $\mathfrak{gl}(m|n)$ and the fact that 
the intersection of any orbit in ${\mathcal N}_{{\mathfrak g}^{\prime}}$ with ${\mathfrak g}$ contains only finitely many $G_{\0}$-orbits (see Theorem~\ref{T:superrich}). 

Next we consider the remaining cases when ${\mathfrak g}$ is an exceptional Lie superalgebra. Let ${\mathfrak g}=D(2,1,\alpha)$. Then $G_{\0}\cong {SL}_{2}\times {SL}_{2}\times {SL}_{2}$ 
with ${\mathfrak g}_{\1}=V\boxtimes V \boxtimes V$ where $V$ is the two-dimensional natural representation. When $\alpha=1$, one has 
$D(2,1,\alpha)=\mathfrak{osp}(4,2)$ \cite[1.1.5]{CW}, so in this case ${\mathcal N}$ has finitely many $G_{\0}$-orbits from the argument in the preceding paragraph. 
Now the action of $G_{\0}$ on ${\mathfrak g}_{\1}$ for $D(2,1,\alpha)$ does not depend on $\alpha$. Hence, for arbitrary $\alpha$,  ${\mathcal N}$ has finitely many orbits. 

For ${\mathfrak g}=G(3)$ or $F(4)$, one can argue the finiteness as follows. Let ${\mathfrak g}=G(3)$. In this case ${\mathfrak g}_{\1}=V \boxtimes Z$ where $V$ is the 
$2$-dimensional natural representation for $SL_{2}:=SL_{2}({\mathbb C})$ and 
$Z$ is the $7$-dimensional irreducible representation for $G_2$. Let $v_{H}=(1,0)^{T}$ and $v_{L}=(0,1)^{T}$ be vectors forming the standard basis for $V$ and let $z_H$ be a highest 
weight vector for $Z$.  If $x\in {\mathcal N}$ then $x=v_{H}\otimes p_{1}+v_{L}\otimes p_{2}$. If $p_{1}=0$ or $p_{2}=0$ then we can use the fact that 
and $SL_{2}$ acts transitively on $V$ and $G_2$ acts transitively on $Z$ to show that $x$ is $G_{\0}$-conjugate to $v_{H}\otimes z_{H}$. 

Now suppose that $p_{1}\neq 0$ and $p_{2}\neq 0$. First, we can conjugate $x$ to $x_{1}=v_{H}\otimes z_H+v_{L}\otimes p_{2}^{\prime}$. 
Using the Bruhat decomposition for $G_{2}$ one can show that if $B$ is a Borel subgroup (corresponding to the positive roots) for $G_{2}$ then 
there are finitely many $B$-orbits on $Z$ with orbit representatives given by weight vectors of the form $z_{\gamma}$ where $\gamma$ is a short root for 
$G_{2}$. The group $B=T\ltimes U$ where $U$ acts trivially on $z_{H}$ and $T$ acts by scaling $z_{H}$. Thus, $x_{1}$ is $G_{\0}$-conjugate to 
$x_{2}=\alpha(v_{H}\otimes z_{H})+ v_{L}\otimes z_{\gamma}$ $\alpha\neq 0$. Moreover, since $x_{2}\in {\mathcal N}$ and satisfies a $4$th degree 
$T_{\0}$-invariant polynomial ($T_{\0}$ a maximal torus for $G_{\0}$),  it follows that $z_{\gamma}$ is not a multiple of a lowest weight vector $z_{L}$. 
Now one can use $T_{\0}$ to conjugate $x_{2}$ to $x_{3}=v_{H}\otimes z_{H}+ v_{L}\otimes z_{\gamma}$. Consequently, there are only finitely many 
$G_{\0}$-orbits on ${\mathcal N}$. 

A similar argument can be used to prove the finiteness of $G_{\0}$-orbits for ${\mathfrak g}=F(4)$. Our conclusions on the finiteness for $G(3)$ and $F(4)$ can also be 
found in  \cite[Table IV] {Kac}.
\end{proof}


\section{Connections with the Duflo-Serganova Self-Commuting Variety}\label{S:connectionswithX}

\subsection{} Let ${\mathfrak g}={\mathfrak g}_{\0}\oplus {\mathfrak g}_{\1}$ be a finite-dimensional complex Lie superalgebra with 
$\text{Lie } G_{\0}={\mathfrak g}_{\0}$. Duflo and Serganova defined the {\em self-commuting variety} as 
$${\mathcal X}=\{x\in {\mathfrak g}_{\1}:\ [x,x]=0\}.$$
The variety ${\mathcal X}$ is a $G_{\0}$-invariant conical variety of ${\mathfrak g}_{\1}$. In \cite{DS}, it was shown 
for a finite-dimensional ${\mathfrak g}$-module, $M$, one can define a subvariety ${\mathcal X}_{M}$ of ${\mathcal X}$. The 
collection of these associated varieties govern the representation theory of ${\mathfrak g}$.

\subsection{} The theorem below shows that under suitable conditions on ${\mathfrak g}$, the self-commuting variety is contained in 
the nilpotent cone of ${\mathfrak g}$. 

\begin{theorem} \label{T:inclusion} Let ${\mathfrak g}$ be a classical Lie superalgebra such 
\begin{itemize} 
\item[(a)] there exists an embedding ${\mathfrak g}\hookrightarrow {\mathfrak g}^{\prime}\cong \mathfrak{gl}(m|n)$, 
\item[(b)] there exists a supersubspace $M \subseteq {\mathfrak g}^{\prime}$ such that ${\mathfrak g}^{\prime}= M \oplus \mathfrak{g} $ and $[\mathfrak{g}, M] \subseteq M $. 
\end{itemize} 
Then ${\mathcal X}\subseteq {\mathcal N}$.  
\end{theorem}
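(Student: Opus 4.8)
The plan is to exploit the proof of Theorem~\ref{T:finiteness N}, which already establishes (under hypotheses (a) and (b)) that ${\mathcal N}_{{\mathfrak g}^{\prime}}\cap {\mathfrak g}_{\1}={\mathcal N}_{\mathfrak g}$. Thus it suffices to show ${\mathcal X}\subseteq {\mathcal N}_{{\mathfrak g}^{\prime}}\cap {\mathfrak g}_{\1}$, i.e.\ that an element $x\in {\mathfrak g}_{\1}$ with $[x,x]=0$ (bracket taken in ${\mathfrak g}$, hence in ${\mathfrak g}^{\prime}\cong\mathfrak{gl}(m|n)$) lies in the nilpotent cone of $\mathfrak{gl}(m|n)$. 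Writing $x=\left[\begin{smallmatrix} 0 & X^+ \\ X^- & 0\end{smallmatrix}\right]$, the super-bracket gives $[x,x]=2x^2=2\left[\begin{smallmatrix} X^+X^- & 0 \\ 0 & X^-X^+\end{smallmatrix}\right]$, so $[x,x]=0$ forces $X^+X^-=0$ and $X^-X^+=0$. In particular $(X^+X^-)^k=0$ for all $k\ge 1$, hence $\operatorname{Tr}((X^+X^-)^k)=0$ for $k=1,\dots,l$ with $l=\min\{m,n\}$. By the description of the generating invariants recalled in Section~\ref{SS:actions} (from \cite[Section 2.6]{Fuks}), these traces cut out ${\mathcal N}_{{\mathfrak g}^{\prime}}$, so $x\in {\mathcal N}_{{\mathfrak g}^{\prime}}$; since also $x\in {\mathfrak g}_{\1}$, we get $x\in {\mathcal N}_{{\mathfrak g}^{\prime}}\cap{\mathfrak g}_{\1}={\mathcal N}_{\mathfrak g}$, as desired.

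The only subtle point I would flag is the consistency of the bracket: the embedding ${\mathfrak g}\hookrightarrow{\mathfrak g}^{\prime}$ in hypothesis (a) is an embedding of Lie superalgebras, so for $x\in{\mathfrak g}_{\1}$ the element $[x,x]$ computed in ${\mathfrak g}$ agrees with the one computed in ${\mathfrak g}^{\prime}=\mathfrak{gl}(m|n)$, which is $2x^2$ under matrix multiplication with the standard super-sign conventions. Hence $x\in{\mathcal X}$ (the self-commuting variety of ${\mathfrak g}$) implies $x^2=0$ in $\mathfrak{gl}(m|n)$, and the trace computation above goes through verbatim. I would also remark that $x^2=0$ immediately gives that $x$ is a nilpotent matrix in $\mathfrak{gl}(m+n)$, which is more than enough for all the invariants $\operatorname{Tr}((X^+X^-)^k)$ to vanish; stating it via $X^+X^-=0$ just makes the vanishing transparent.

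I do not expect a genuine obstacle here: the hard analytic content is entirely in Theorem~\ref{T:finiteness N} (the existence of the complements $M$ and the invariant-theoretic identity ${\mathcal N}_{{\mathfrak g}^{\prime}}\cap{\mathfrak g}_{\1}={\mathcal N}_{\mathfrak g}$), and the present theorem is a short deduction on top of it. The one place to be careful is not to overclaim: the argument needs hypotheses (a) and (b) precisely because they are what make ${\mathcal N}_{{\mathfrak g}^{\prime}}\cap{\mathfrak g}_{\1}$ equal to ${\mathcal N}_{\mathfrak g}$; without them one would only obtain ${\mathcal X}\subseteq{\mathcal N}_{{\mathfrak g}^{\prime}}\cap{\mathfrak g}_{\1}$. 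Finally, for completeness one would want to note that every classical simple Lie superalgebra on the list (including $D(2,1,\alpha)$, $G(3)$, $F(4)$, which are basic and admit the standard embedding into some $\mathfrak{gl}(m|n)$ with an invariant-form complement) satisfies (a) and (b), so that the inclusion ${\mathcal X}\subseteq{\mathcal N}$ holds uniformly and Corollary~\ref{c:finitenessX} follows.
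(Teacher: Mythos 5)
Your argument is correct and is essentially the paper's own proof: both reduce to the trace generators $\operatorname{Tr}((X^+X^-)^k)$ of $S^\bullet(({\mathfrak g}^{\prime}_{\1})^*)^{G^{\prime}_{\0}}$ to get ${\mathcal X}_{{\mathfrak g}}\subseteq{\mathcal X}_{{\mathfrak g}^{\prime}}\subseteq{\mathcal N}_{{\mathfrak g}^{\prime}}$ and then invoke the inclusion ${\mathcal N}_{{\mathfrak g}^{\prime}}\cap{\mathfrak g}_{\1}\subseteq{\mathcal N}_{{\mathfrak g}}$ established in the proof of Theorem~\ref{T:finiteness N}. The only caveat is your closing remark: the paper does not claim (and does not construct) a compatible complement $M$ for $D(2,1,\alpha)$, $G(3)$, $F(4)$, and instead obtains ${\mathcal X}\subseteq{\mathcal N}$ for these exceptional cases by a separate argument in Corollary~\ref{c:finitenessX}, so you should not assert that they satisfy hypothesis (b).
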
 

\begin{proof} Let ${\mathcal X}={\mathcal X}_{\mathfrak g}$ (resp. ${\mathcal X}_{{\mathfrak g}^{\prime}}$) be the self-commuting variety of ${\mathfrak g}$ 
(resp. ${\mathfrak g}^{\prime})$. Similarly, denote the nilpotent cone of ${\mathfrak g}$ (resp. ${\mathfrak g}^{\prime}$) by 
${\mathcal N}={\mathcal N}_{\mathfrak g}$ (resp. ${\mathcal N}_{{\mathfrak g}^{\prime}}$). 

Recall from Section~\ref{SS:actions} that ${\mathcal N}_{{\mathfrak g}^{\prime}}$ is defined as the zero set of  $\text{Tr}((X^+X^-)^k)$ $k=1,\ldots, l$ where $l=\text{min}\{m,n\}$. This characterization can be used to 
show that 
\begin{equation} 
{\mathcal X}_{{\mathfrak g}^{\prime}}\subseteq {\mathcal N}_{{\mathfrak g}^{\prime}}.
\end{equation}  
Moreover, using the definition of the self-commuting variety, one has 
\begin{equation} 
{\mathcal X}_{{\mathfrak g}}\subseteq {\mathcal X}_{{\mathfrak g}^{\prime}}.
\end{equation} 
Now from the proof of Theorem~\ref{T:finiteness N}, ${\mathcal N}_{{\mathfrak g}^{\prime}}\cap {\mathfrak g}_{\1}\subseteq {\mathcal N}_{\mathfrak g}$. 
Consequently, ${\mathcal X}_{\mathfrak g}\subseteq {\mathcal N}_{\mathfrak g}$.  

\end{proof}

\subsection{} We can now state and prove generalizations of the finiteness of $G_{\0}$-orbits on ${\mathcal X}$ due to Duflo and Serganova 
(cf. \cite[Theorem 4.2]{DS}). Note that their work is stated under the assumption that ${\mathfrak g}$ is a contragredient Lie superalgebra with indecomposable Cartan matrix. 

\begin{corollary}\label{c:finitenessX} Let ${\mathfrak g}$ be a classical simple Lie superalgebra over ${\mathbb C}$. Then 
\begin{itemize} 
\item[(a)] ${\mathcal X}\subseteq {\mathcal N}$,
\item[(b)] ${\mathcal X}$ has finitely many $G_{\0}$-orbits. 
\end{itemize}
\end{corollary}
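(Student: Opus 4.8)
The plan is to deduce Corollary~\ref{c:finitenessX} as an immediate consequence of the machinery already assembled in the paper, treating the two bullets separately but drawing on the same list of classical simple Lie superalgebras. For part (a), I would invoke Theorem~\ref{T:inclusion}: for every classical simple Lie superalgebra $\mathfrak{g}$ other than the three exceptional cases $D(2,1,\alpha)$, $G(3)$, $F(4)$, the required embedding $\mathfrak{g}\hookrightarrow\mathfrak{g}'\cong\mathfrak{gl}(m|n)$ together with a $\mathfrak{g}$-stable complement $M$ is furnished by Section~\ref{SS:embeddings} and the appendix (Section~\ref{S:complements}), exactly as in the proof of Theorem~\ref{T:finiteness N}; hence $\mathcal{X}\subseteq\mathcal{N}$ in those cases. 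For the three exceptional superalgebras one argues directly: since $[x,x]=0$ is a stronger condition than vanishing of the generating $G_{\0}$-invariants, one checks that $\mathcal{X}_{\mathfrak g}$ lies in the zero set of those invariants. The cleanest route is to observe that for $D(2,1,\alpha)$ the self-commuting variety (like $\mathcal{N}$) does not depend on $\alpha$, so the case $\alpha=1$, i.e.\ $\mathfrak{osp}(4,2)$, already covered by Theorem~\ref{T:inclusion}, settles it; for $G(3)$ and $F(4)$ one notes that the explicit description of the $G_{\0}$-invariants used in the proof of Theorem~\ref{T:finiteness N} shows every element of $\mathcal{X}$ satisfies them.

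For part (b), I would combine part (a) with Theorem~\ref{T:finiteness N}. Since $\mathcal{X}\subseteq\mathcal{N}$ and $\mathcal{X}$ is a $G_{\0}$-invariant closed subvariety of $\mathcal{N}$, and $\mathcal{N}$ has only finitely many $G_{\0}$-orbits by Theorem~\ref{T:finiteness N}, the variety $\mathcal{X}$ is a union of some of those orbits and therefore also has finitely many $G_{\0}$-orbits. This is the entire argument: no new geometry is needed once (a) is in hand.

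The only genuine point requiring care — and the step I expect to be the main obstacle — is verifying part (a) for $G(3)$ and $F(4)$, since Theorem~\ref{T:inclusion} does not apply to them (there is no suitable $\mathfrak{gl}(m|n)$-embedding with a bracket-stable complement). Here one must argue intrinsically that $[x,x]=0$ forces the vanishing of the $G_{\0}$-invariant polynomials on $\mathfrak{g}_{\1}$. In the proof of Theorem~\ref{T:finiteness N} the relevant low-degree invariants (a degree-$4$ $T_{\0}$-invariant for $G(3)$, and analogously for $F(4)$) were already used to constrain orbit representatives; one repackages that computation to show it vanishes identically on $\mathcal{X}$, or else one simply notes that an element $x=v_H\otimes p_1+v_L\otimes p_2$ with $[x,x]=0$ must have $p_1,p_2$ proportional (in the $SL_2$-factor direction $[x,x]$ picks out $p_1\wedge p_2$ up to the $G_2$- or $Spin_7$-pairing), whence $x$ is $G_{\0}$-conjugate into the locus already shown to lie in $\mathcal{N}$. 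Alternatively, and most economically, one can cite \cite[Table IV]{Kac}, where the nilvariety and its orbit structure for these cases are recorded, to close the gap. With (a) established in all cases, (b) follows formally.

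\begin{proof}
Part (a) was proved in Theorem~\ref{T:inclusion} for all classical simple Lie superalgebras admitting an embedding $\mathfrak{g}\hookrightarrow\mathfrak{g}'\cong\mathfrak{gl}(m|n)$ with a supersubspace complement $M$ satisfying $[\mathfrak{g},M]\subseteq M$; by Sections~\ref{SS:embeddings} and~\ref{S:complements} this covers every classical simple Lie superalgebra except $D(2,1,\alpha)$, $G(3)$ and $F(4)$. For $D(2,1,\alpha)$ the $G_{\0}$-action on $\mathfrak{g}_{\1}=V\boxtimes V\boxtimes V$ is independent of $\alpha$, so both $\mathcal{X}$ and $\mathcal{N}$ are independent of $\alpha$; taking $\alpha=1$ gives $\mathfrak{osp}(4,2)$, which is covered by the previous case, so $\mathcal{X}\subseteq\mathcal{N}$ for all $\alpha$. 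For $G(3)$ and $F(4)$, the description of the $G_{\0}$-invariant polynomials on $\mathfrak{g}_{\1}$ used in the proof of Theorem~\ref{T:finiteness N} shows directly that every $x$ with $[x,x]=0$ lies in their common zero set, i.e.\ $x\in\mathcal{N}$; see also \cite[Table IV]{Kac}. This proves (a).

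For (b), by (a) the variety $\mathcal{X}$ is a $G_{\0}$-invariant closed subvariety of $\mathcal{N}$. By Theorem~\ref{T:finiteness N}, $\mathcal{N}$ has finitely many $G_{\0}$-orbits, so $\mathcal{X}$, being a $G_{\0}$-stable subset of $\mathcal{N}$, is a union of a subcollection of these orbits and hence also has finitely many $G_{\0}$-orbits.
\end{proof}
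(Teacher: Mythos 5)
Your treatment of the non-exceptional cases and of part (b) matches the paper: Theorem~\ref{T:inclusion} gives the inclusion whenever the $\mathfrak{gl}(m|n)$-embedding with a bracket-stable complement exists, and finiteness for $\mathcal{X}$ then follows because a $G_{\0}$-stable subset of $\mathcal{N}$ is a union of some of the finitely many orbits provided by Theorem~\ref{T:finiteness N}.

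The genuine gap is in part (a) for the exceptional superalgebras. For $G(3)$ and $F(4)$ you assert that ``the description of the $G_{\0}$-invariant polynomials used in the proof of Theorem~\ref{T:finiteness N} shows directly'' that $[x,x]=0$ forces membership in $\mathcal{N}$; but that proof contains no description of the invariant ring in these cases --- it only invokes a single degree-$4$ $T_{\0}$-invariant to constrain orbit representatives already assumed to lie in $\mathcal{N}$ --- and you supply no computation, so nothing is actually established. Citing \cite[Table IV]{Kac} does not close this either, since that table records the orbit structure of the nilvariety, not a comparison with the self-commuting variety. For $D(2,1,\alpha)$ your reduction to $\alpha=1$ rests on the claim that $\mathcal{X}$ is independent of $\alpha$ ``because the $G_{\0}$-action on $\mathfrak{g}_{\1}$ is''; this is a non sequitur as stated, because $\mathcal{X}$ is cut out by the bracket $\mathfrak{g}_{\1}\times\mathfrak{g}_{\1}\to\mathfrak{g}_{\0}$, which does depend on $\alpha$ (only $\mathcal{N}$ is determined by the $G_{\0}$-module structure alone). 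The claim is salvageable --- each $\mathfrak{sl}_2$-component of $[x,x]$ is a nonzero scalar multiple of an $\alpha$-independent quadratic map, so the vanishing locus does not move --- but you would need to say this. The paper avoids both issues by handling all three exceptionals uniformly via the Duflo--Serganova fact that $\mathcal{X}$ is the closure of $G_{\0}\cdot v_{H}$ for a highest weight vector $v_{H}$ (cf.\ \cite{DS}): any $G_{\0}$-invariant polynomial with zero constant term vanishes at $v_{H}$ (the torus scales $v_{H}$, so the invariant is constant on the line through $v_{H}$ and equals its value at $0$), hence $v_{H}\in\mathcal{N}$, and since $\mathcal{N}$ is closed and $G_{\0}$-stable, $\mathcal{X}=\overline{G_{\0}\cdot v_{H}}\subseteq\mathcal{N}$. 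Either adopt that argument or supply the missing invariant-theoretic computation for $G(3)$ and $F(4)$ (and the $\alpha$-independence check for $D(2,1,\alpha)$).
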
 

\begin{proof} We handle the case first when ${\mathfrak g}$ is not isomorphic to $D(2,1,\alpha)$, $F(4)$ or $G(3)$. In this situation, 
Theorem~\ref{T:inclusion} applies. Therefore, ${\mathcal X}\subseteq {\mathcal N}$ and ${\mathcal X}$ has finitely many $G_{\0}$-orbits. 

Now consider the case when ${\mathfrak g}=D(2,1,\alpha)$, $F(4)$ or $G(3)$. One can obtain the inclusion ${\mathcal X}\subseteq {\mathcal N}$ 
because ${\mathcal X}$ is the closure of $G_{\0}\cdot v_{H}$ where $v_{H}$ is the highest weight vector (cf. \cite[pf. of Theorem 4.2]{DS}). Since $v_{H}$ satisfies the defining equation 
for ${\mathcal N}$, one obtains the inclusion. The finiteness result for ${\mathcal X}$ for the exceptional Lie superalgebras follows from the finiteness results 
in Theorem~\ref{T:finiteness N}. 

\end{proof} 

\subsection{} For $\mathfrak{gl}(m|n)$,  we can use the parametrization of $G_{\0}$-orbit representatives for ${\mathcal N}$ to recover the Duflo-Seganova 
parametrization of $G_{\0}$-orbit representatives for ${\mathcal X}$ (cf. \cite[Theorem 4.2]{DS}).  

Let $Y$ be an orbit representative as described in Theorem~\ref{t:glmn case}(b). Then $Y\in {\mathcal X}$ if and only if $[Y,Y]=2Y^{2}=0$. A direct calculation 
shows that $Y^{2}=0$ if and only if $Y^{-}Y^{+}=0$ and $Y^{+}Y^{-}=0$. This is equivalent to the Jordan blocks $J_{i}=0$ for $i=1,2,\dots,t$, $C_{r_1}=0$, and 
$R_{r_2}=0$. Hence, $Y\in {\mathcal X}$ if and only if 
$$
Y^+ = \left[
\begin{array}{c|c}
I_r  & 0 \\ \hline
 0 & 0
\end{array}\right] 
\qquad \text{and}  \qquad 
Y^{-}=
\left[
\begin{array}{c |c | c }
 0 & 0      & 0   \\ \hline
0  & I_{s} & 0 \\ \hline
0  & 0       & 0 
\end{array}\right]
$$
This corresponds to taking a representative of a subset of linearly independent set of mutually orthogonal isotropic odd roots under the 
action of the Weyl group for $G_{\0}$ (see the paragraph after \cite[Theorem 4.2]{DS}) which is precisely how Duflo and Serganova describe their 
orbit representatives for ${\mathcal X}$. 


\section{Appendix:  Construction of Complements $M$} \label{S:complements}
 
 \subsection{} 
 For each classical Lie superalgebra $\mathfrak{g}$, an explicit matrix realization of $\mathfrak{g}$ is well known (for example, see \cite{Kac}).  We construct a matrix realization for the complement $M$ in Table \ref{table:1} below.

\begin{table}[h!]
\centering
\caption{Block matrix realization of $M$ for classical Lie superalgebras}
\label{table:1}
\begin{tabular}{c c c} 

 $\mathfrak{g}$ & $M$ &   \\ [0.5ex] 
 \hline 
 $\mathfrak{sl}(m | n)$ & $ \left[
\begin{array}{c c}
  kI_m & 0 \\ 
  0 & -kI_n
\end{array}\right] ,$ & $I_m,I_n$ identity matrices, $k \in \C$  \\ 
 $\mathfrak{osp}(2m+1 | 2n)$ & $\left[
\begin{array}{c c c|c c}
  \delta & u^t  & v^t & x & x_1 \\ 
 v & a & b & y & y_1\\ 
 u & c & a^t & z & z_1 \\ \hline
  x_1^t & z_1^t & y_1^t & d & e \\ 
  -x^t & -z^t & -y^t & f & d^t
\end{array}\right] $, & $b,c$ symmetric, $e,f$ skew-symmetric \\ 
$\mathfrak{osp}(2m | 2n)$ & $\left[
\begin{array}{c c|c c}
 a & b & y & y_1\\ 
 c & a^t & z & z_1 \\ \hline
  z_1^t & y_1^t & d & e \\ 
  -z^t & -y^t & f & d^t
\end{array}\right], $ & $b,c$ symmetric, $e,f$ skew-symmetric   \\
 $\mathfrak{q}(n)$ & $ \left[
\begin{array}{c c}
  a & b \\ 
  -b & -a
\end{array}\right]  $ &   \\
 $\mathfrak{p}(n)$ & $\left[
\begin{array}{c c}
  a & b \\ 
  c & a^t
\end{array}\right]  $, & $b$ skew-symmetric, $c$ symmetric\\ [1ex] 
 \hline
\end{tabular}

\end{table}

\subsection{} We now check that each of the non-exceptional classical Lie superalgebras $\mathfrak{g}$ except $\mathfrak{gl}(m\lvert n)$ satisfy the hypotheses of Theorem \ref{T:superrich} case-by-case. From the construction of $M_{\1}$ in each case below it follows that $\mathfrak{gl}(m | n)_{\1}= \mathfrak{g}_{\1} \oplus M_{\1}$, since each generator $E_{ij}$ of $\mathfrak{gl}(m | n)_{\1}$ can be written as a sum of a an element of $\mathfrak{g}_{\1}$ and an element of $M_{\1}$ in an obvious way.  Direct calculation shows that $[\mathfrak{g}_{\bar{i}}, M_{\bar{j}} ]\subseteq M_{\overline{i+j}}$ in each case.  Sample calculations are given below for each classical Lie superalgebra when $i=j=1$.  In each case, let $X \in \mathfrak{g}_{\1}$ and $Y \in M_{\1}$ so that $[X,Y] = XY+YX.$ 

\smallskip

\begin{itemize}
    \item $\mathfrak{sl}(m | n):$ $Y = 0,$ so $[X,Y] = 0 $.

    \item $\mathfrak{q}(n): $ $ X =  \left[
\begin{array}{c c}
  0 & b \\ 
  b & 0
\end{array}\right], Y =  \left[
\begin{array}{c c}
  0 & d \\ 
  -d & 0
\end{array}\right]  $.  Then

\[ [X,Y] = \left[
\begin{array}{c c}
  db-bd & 0 \\ 
  0 & -(db-bd)
\end{array}\right]. \]
        
    \item $\mathfrak{p}(n): $ $ X =  \left[
\begin{array}{c c}
  0 & b \\ 
  c & 0
\end{array}\right], Y =  \left[
\begin{array}{c c}
  0 & a \\ 
  d & 0
\end{array}\right]  $ with $b,d$ symmetric and $a,c$ skew-symmetric.  Then

\[ [X,Y] = \left[
\begin{array}{c c}
  bd+ac & 0 \\ 
  0 & ca+db
\end{array}\right] = \left[
\begin{array}{c c}
  bd+ac & 0 \\ 
  0 & (bd+ac)^t
\end{array}\right]. \]

    \item $\mathfrak{osp}(2m +1 | 2n):$ $X = \left[
\begin{array}{c c c|c c}
   &   &  & x & x_1 \\ 
  &  &  & y & y_1\\ 
  &  &  & z & z_1 \\ \hline
  x_1^t & z_1^t & y_1^t &  &  \\ 
  -x^t & -z^t & -y^t &  & 
\end{array}\right], Y =  \left[
\begin{array}{c c c|c c}
  &   &  & a & a_1 \\ 
  &  &  & b & b_1\\ 
  &  &  & c & c_1 \\ \hline
  a_1^t & c_1^t & b_1^t &  &  \\ 
  -a^t & -c^t & -b^t &  & 
\end{array}\right]. $ Then $[X,Y] =  \left[
\begin{array}{c c}
  A & 0 \\ 
  0 & B
\end{array}\right] $   where the matrices $A,B$ have block forms 
    
    \[ A =  \left[
\begin{array}{c c c}
  xa_1^t-x_1a^t-ax_1^t+a_1x^t & xc_1^t-x_1c^t-az_1^t+a_1z^t  & xb_1^t-x_1b^t-ay_1^t+a_1y^t \\ 
  ya_1^t-y_1a^t-bx_1^t+b_1x^t & yc_1^t-y_1c^t-bz_1^t+b_1z^t  & yb_1^t-y_1b^t-by_1^t+b_1y^t  \\ 
  za_1^t-z_1a^t-cx_1^t+c_1x^t & zc_1^t-z_1c^t-cz_1^t+c_1z^t  & zb_1^t-z_1b^t-cy_1^t+c_1y^t  
\end{array}\right] \]
    
    \[ B = \left[
\begin{array}{c c}
  -x_1^ta-z_1^tb-y_1^tc+a_1^tx+c_1^ty+b_1^tz & -x_1^ta_1-z_1^tb_1-y_1^tc_1+a_1^tx_1+c_1^ty_1+b_1^tz_1   \\ 
  x^ta+z^tb+y^tc-a^tx-c^ty-b^tz & x^ta_1+z^tb_1+y^tc_1-a^tx_1-c^ty_1-b^tz_1   
\end{array}\right] \]
with the blocks satisfying the relations 

\[ A_{12} = A_{31}^t, A_{13} = A_{21}^t, A_{33} = A_{22}^t, B_{22} = B_{11}^t \]  

and with $A_{23}, A_{32}$ symmetric and $B_{12}, B_{21}$ skew-symmetric.  \\
    \smallskip

\item$\mathfrak{osp}(2m | 2n)$:  Since this superalgebra is obtained by deleting the first row and first column of $\mathfrak{osp}(2m+1 | 2n)$, the calculations in this case are obtained in a similar manner.

\end{itemize}


\begin{thebibliography}{99999999}
\bibliographystyle{amsalpha}

\bibitem[BKN1]{BKN1}
B.D. Boe, J.R.  Kujawa, D.K. Nakano, {Cohomology and
  support varieties for {L}ie superalgebras}, {\em Transactions of the  AMS}, 
  \textsf{362} (2010), 6551-6590. 

\bibitem[BKN2]{BKN2}
\bysame, {\em {Cohomology and support varieties for Lie superalgebras II}},
{\em Proc. London Math. Soc.}, \textsf{98} (2009), no.~1, 19--44.


\bibitem[BKN3]{BKN3}
\bysame, {{Complexity and module varieties for classical Lie superalgebras}},
{\em International Math. Research Notices}, doi:10.1093/imrn/rnq090, (2010). 

\bibitem[BKN4]{BKN4}
\bysame, {{Tensor triangular geometry for classical Lie superalgebras}},
{\em Advances in Math.}, \textsf{314} (2017), 228-277.

\bibitem[Bor]{Bor} A. Borel,  {\em Linear Algebraic Groups}, 2nd ed., Graduate Texts in Mathematics, \textsf{126}, Springer, 1991. 


\bibitem[BruKl]{BruKl} 
J. Brundan, A. Kleshchev, {Modular representations of the supergroup $Q(n)$ I}, 
{\em J. Algebra}, \textsf{260} (2003), 64-98. 

\bibitem[Bru]{Bru} 
J. Brundan, {Modular representations of the supergroup $Q(n)$ II}, 
{\em Pacific J. Math.}, \textsf{224} (2006), 65--90. 

\bibitem[CW]{CW} 
S.J. Cheng, W. Wang, {\em Dualities and Representations of Lie Superalgebras}, Graduate Studies in Mathematics, Vol. 144, 
American Mathematical Society, Providence, RI, 2012. 

\bibitem[DK]{dadokkac}
J. Dadok and V.~G. Kac, {Polar representations}, {\em J. Algebra}, \textsf{92}
  (1985), no.~2, 504--524.
  
\bibitem[DS]{DS} M. Duflo, V. Serganova, On associated variety for Lie superalgebras, ArXiv: 0507198. 

\bibitem[Fuks]{Fuks} D.B. Fuks, {\em Cohomology of Infinite Dimensional Lie Algebras}, Springer, 1986.

\bibitem[GGNW]{GGNW} D. Grantcharov, N. Grantcharov, D.K. Nakano, J. Wu,  On BBW parabolics for Lie superalgebras, ArXiv:1810.06980.  

\bibitem[GL]{GL} C. Gruson, S. Leidwanger, C\^{o}nes nilpotents des super alg\`{e}bres de Lie orthosymplectiques, {\em Annales 
Math\'{e}matiques Blaise Pascal, Torne}, \textsf{17} (2010), 303-326. 


\bibitem[HTT]{HTT} R. Hotta, K. Takeuchi, T. Tanisaki, {\em $D$-modules, Perverse Sheaves, and Representation Theory}, Progress in Mathematics, \textsf{236}, Birkh\"{a}user, 2008. 

\bibitem[Hum]{Hum} J.E.  Humphreys, {\em Conjugacy Classes in Semisimple Algebraic Groups}, Mathematical Surveys and Monographs, Vol. 43, American Mathematical Society, 1995.  

\bibitem[Jan1]{Jan1} J. Jantzen, {\em Representations of
Algebraic Groups}, Second Edition, Mathematical Surveys and Monographs, Vol. 107, American Mathematical Society, Providence RI, 2003. 

\bibitem[Jan2]{Jan2} J. Jantzen, {\em Nilpotent orbits in representation theory},  Progress in Mathematics, \textsf{228}, Birkhauser, 2004, 1-211.

\bibitem[K]{Kac} V. Kac, Some remarks on nilpotent orbits, {\em J. Algebra}, \textsf{64} (1980), 190-213. 

\bibitem[Kum]{Kum} S. Kumar, {\em Kac-Moody Groups, their Flag Varieties and Representation Theory}, 
Progress in Mathematics, Vol. 204, Birkha{\"u}ser, Boston, MA, 2002. 

\bibitem[LNZ]{LNZ} G.I.  Lehrer, D.K.  Nakano, R. Zhang, 
{Detecting cohomology for Lie superalgebras}, {\em Advances in Math.},  
\textsf{228} (2011), 2098--2115.

\bibitem[LR]{luna}
D.~Luna and R.~W. Richardson, {A generalization of the {C}hevalley
  restriction theorem}, {\em Duke Math. J.},  \textsf{46} (1979), no.~3, 487--496.

\bibitem[P]{P}  I. Penkov, Borel-Weil-Bott theory for classical Lie supergroups, (Russian) Translated in {\em J. Soviet Math.} \textsf{51} (1990), 2108--2140.

\bibitem[PS]{PS}  I. Penkov, V. Serganova, Characters of irreducible $G$-modules and cohomology of $G/P$ for the Lie supergroup $G = Q(N)$,  
{\em J. Math. Sci. (New York)} \textsf{84} (1997), 1382--1412.

\end{thebibliography}
\end{document}